\theoremstyle{plain} %% This is the default
\newtheorem{theorem}{Theorem}[section]
\theoremstyle{definition}
\newtheorem{corollary}[theorem]{Corollary}
\newtheorem{lemma}[theorem]{Lemma}
\newtheorem{proposition}{Proposition}[section]
\newtheorem{definition}{Definition}[section]
\newtheorem{remark}{Remark}[section]
\title{A study on $A_\alpha$-spectrum and $A_\alpha$-energy of unitary addition Cayley graphs
%%%% Cite as
%%%% Update your official citation here when published 
% \thanks{\textit{\underline{Citation}}: 
% \textbf{Authors. Title. Pages.... DOI:000000/11111.}} 
}
\author{
  Najiya V K, Chithra A V \\
 Department of Mathematics \\
 National Institute of Technology, Calicut\\
 Kerala, India-673601\\
  \texttt{najiya\_p190046ma@nitc.ac.in, chithra@nitc.ac.in} \\
  %% examples of more authors
  \And
 Naveen Palanivel \\
  Department of Mathematics,\\
  SASTRA Deemed to be University \\
  Thanjavur, India-613401\\
  \texttt{naveenpalanivel.nitc@gmail.com} \\
  %% \AND
  %% Coauthor \\
  %% Affiliation \\
  %% Address \\
  %% \texttt{email} \\
  %% \And
  %% Coauthor \\
  %% Affiliation \\
  %% Address \\
  %% \texttt{email} \\
  %% \And
  %% Coauthor \\
  %% Affiliation \\
  %% Address \\
  %% \texttt{email} \\
}
\begin{document}
\maketitle

\begin{abstract}
 The unitary addition Cayley graph $G_n$, $n\in Z^+$ is the graph whose vertex set is $Z_n$, the ring of integers modulo $n$ and two vertices $u$ and $v$ are adjacent if and only if $u + v \in \cup_n$ where $\cup_n$ is the set of all units of the ring.
The $A_\alpha$-matrix of a graph $G$ is defined as $A_\alpha (G) = \alpha D(G) +
(1-\alpha)A(G)$, $\alpha \in [0, 1]$, where $D(G)$ is the diagonal matrix of vertex degrees and $A(G)$ is the adjacency
matrix of $G$.
In this paper, we investigate the $A_\alpha$-eigenvalues for unitary addition Cayley graph and its complement. We determine bounds for $A_\alpha$-eigenvalues of unitary addition Cayley graph when its order is odd. Consequently, we compute the $A_\alpha$-energy of both $G_n$ and its complement, $\overline{G_n}$, for $n={p}^m$ where ${p}$ is a prime number and $n$ even. Moreover, we obtain some bounds for energies of $G_n$ and $\overline{G}_n$ when $n$ is odd.
We also define $A_\alpha$-borderenergetic and $A_\alpha$-hyperenergetic graphs and observe some classes for each.
\end{abstract}

% keywords can be removed
\keywords{ Spectrum of a graph, Energy of a graph, Unitary Addition Cayley Graph, $A_\alpha$-matrix}

\section{Introduction}

A graph $G=\left(V,E\right)$ consists of finite nonempty set $V$ of $n$ vertices together with a set $E$ of $m$ edges. The degree of a vertex $v$ is the number of edges incident with it and is denoted as $deg\left(v\right)$. The complement $\overline{G}$ of a graph $G$ has $V\left(G\right)$ as its vertex set, but two vertices are adjacent in $\overline{G}$ if and only if they are not adjacent in $G$. Throughout the paper we consider only simple, undirected graphs. 

Let $\mathcal{G}$ be a group and let $S$ be a subset of $\mathcal{G}$ that is closed under taking inverses and does not contain the identity. Then the Cayley graph $X\left(\mathcal{G}, S\right)$ is the graph with vertex set $\mathcal{G}$ and edge set $E= \{gh : hg^{-1}\in S\}$.

In \cite{dejter1995unitary} Dejter et al. introduced the concept of unitary Cayley graph. Later in \cite{sinha2011some} Deepa et al. introduced the unitary addition Cayley graphs(UACG). Naveen et al. in \cite{palanivel2015some, palanivel2019energy} studied the structural properties, adjacency spectrum and energy of UACG.

For a simple graph $G$ of order $n$, the adjacency matrix $A\left(G\right)$ of order $n$ is a symmetric matrix with 
$$a_{ij}=\begin{cases}
1 & \text{if} \quad v_i \sim v_j \\
0 & \text{otherwise.}
\end{cases}$$ In \cite{nikiforov2017merging} Nikiforov introduced a convex linear combination of the adjacency matrix of $G$ and degree matrix of $G$ called the $A_\alpha$-matrix. For $\alpha\in[0,1]$, $A_\alpha\left(G\right)=\alpha D\left(G\right)+\left(1-\alpha\right)A(G), $ where $D\left(G\right)$ is the diagonal matrix with entries as corresponding vertex degrees. 
Clearly, $A_0(G) = A(G)$, $A_1 (G) = D(G)$ and $Q(G)=2A_{\frac{1}{2}}(G)=A(G)+D(G)$, where $Q(G)$ is the signless Laplacian matrix. 
Also $A_\alpha (G)-A_\beta(G) = (\alpha-\beta)L(G) =
(\alpha-\beta)(D(G)- A(G))$, where $L(G)$ is the Laplacian matrix of $G$ and $\beta\in[0,1]$. $A_\alpha$-matrix is a generalization of other matrices like $A(G), Q(G),L(G)$ etc. Thus, it helps in studying spectral
properties of those matrices as well as uncountable many convex combinations of $D(G)$ and $A(G)$.

The characteristic polynomial of a real symmetric matrix $M$ of order $n$ is defined as $|\lambda I-M|$ where $I$ is the $n$ order identity matrix. The eigenvalues of $M$, $\lambda_1 \geq \lambda_2 \geq \cdots \geq \lambda_n$, together with their multiplicities gives the spectrum of $M$. The spectrum of a graph is the spectrum of the matrix considered. We represent the $M$-spectrum of the graph $G$ as 
$\operatorname{spec}_{M}\left(G\right)=\begin{pmatrix}
\lambda_1 & \lambda_2 & \cdots &\lambda_t\\
m_1 & m_2& \cdots & m_t
\end{pmatrix}$ where $\lambda_i$'s are the $t$ distinct eigenvalues of the matrix $M$ considered with multiplicity $m_i$. The roots of the characteristic polynomial of the adjacency matrix of a graph $G$ are called the adjacency eigenvalues of $G$. Similarly, the roots of $A_\alpha$-matrix gives the $A_\alpha$-spectrum of $G$. If $G$ is an r-regular graph, then its $A_\alpha$-eigenvalues are $\alpha r+\left(1-\alpha\right)\lambda_i\left(A\left(G\right)\right), i=1,2,\cdots,n$. The sum of absolute values of the adjacency eigenvalues of a graph gives the adjacency energy, $\varepsilon\left(G\right)$, of the graph. The $A_\alpha$-energy of a graph $G$, for $\alpha\in\left[0,1\right)$, is defined as $\varepsilon_\alpha\left(G\right)=\displaystyle\sum_{i=1}^n \left|\lambda_i\left(A_\alpha\left(G\right)\right)-\frac{2\alpha m}{n}\right|, 1\leq i\leq n.$ If $G$ is regular, then $\varepsilon_\alpha\left(G\right)=\left(1-\alpha\right)\varepsilon\left(G\right)$. 

If every vertex of a graph is connected to every other vertices, then it is called a complete graph. Initially researchers believed that complete graphs have the maximum energy among all the graphs on $n$ vertices. That is, if $G\neq K_n$ is a graph on $n$ vertices then $\varepsilon(G)<2n-2$. But this was disproved by Gutman and gave rise to the question whether there exist graphs whose energy equals or exceeds the energy of complete graph. In \cite{gutman1999hyperenergetic}, Gutman introduced about graphs whose energy exceeds the energy of complete graphs and called it hyperenergetic graphs.
Later in 2015, Gong et al. in \cite{gong2015borderenergetic} introduced the notion of borderenergetic graphs, that is the graphs on $n$ vertices having energy same as that of $K_n$. In 2017, Tura in \cite{tura2017borderenergetic} gave the Laplacian analogous of borderenergetic graphs. Several families of borderenergetic graphs can be found in \cite{gong2015borderenergetic, jahfar2021construction, ghorbani2020survey, vaidya2024some}.
% Complete graphs were believed to have the maximum energy among the graphs on $n$ vertices until 1999 when in \cite{gutman1999hyperenergetic}, Gutman introduced about graphs whose energy exceeds the energy of complete graphs and called it hyperenergetic graphs.

The tensor product $A\otimes B$ of two matrices $A$ and $B$ of order $m \times n$ and $p \times q$, respectively, is the $mp\times nq$ block matrix $[a_{ i j} B].$ All one matrix, identity matrix and anti diagonal matrix, with non zero entries one, of order $n$ are represented by $J_n, I_n$ and $\widetilde{D}_n$, respectively.

Consider a vector ${s}=\left(s_{0}, s_{1}, \cdots, s_{n-1}\right) \in 
\mathbb{R}^{n}$. The left circulant matrix $C_{L}({s})$ associated with ${s}$ is
$$
C_{L}({s})=\left[\begin{array}{cccc}
s_{0} & s_{1} & \cdots & s_{n-1} \\
s_{1} & s_{2} & \cdots & s_{0} \\
\vdots & \vdots & \ddots & \vdots \\
s_{n-1} & s_{0} & \cdots & s_{n-2}
\end{array}\right]
$$ and the right circulant matrix $C_{R}({s})$ associated with ${s}$ is
$$
C_{R}({s})=\left[\begin{array}{cccc}
s_{0} & s_{1} & \cdots & s_{n-1} \\
s_{n-1} & s_{0} & \cdots & s_{n-2} \\
\vdots & \vdots & \ddots & \vdots \\
s_{1} & s_{2} & \cdots & s_{0}
\end{array}\right].
$$

From \cite{palanivel2019energy}, the eigenvalues of right circulant matrix are 
$\lambda_{j}=s_{0}+s_{1} \omega^{j}+s_{2} \omega^{2 j}+\cdots+s_{n-1} \omega^{(n-1) j}$, $ j=0,1, \ldots, n-1$, where $\omega$ is a complex primitive $n$-th root of unity.

The eigenvalues of left circulant matrix are $\lambda_{0}, \pm\left|\lambda_{1}\right|, \cdots, \pm\left|\lambda_{(n-1) / 2}\right|$ if $n$ is odd and $\lambda_{0}, \lambda_{n / 2}, \pm\left|\lambda_{1}\right|, \cdots,$
$ \pm\left|\lambda_{(n-2) / 2}\right|$ if $n$ is even where $\lambda_{k}$ are the eigenvalues of right circulant matrix. 
For a positive integer $n$ the M\"obius function $\mu(n)$ is defined as\\
$$\mu(n)= \begin{cases}0, & \text { if } n \text { has a squared prime factor } \\ (-1)^{k}, & \text { if } n \text { is a product of } k \geq 0 \text { distinct primes. }\end{cases}$$
For $0 \leq k \leq n-1$, the Ramanujan sum is $c(k, n)=\mu\left(t_{k}\right) \frac{\phi(n)}{\phi\left(t_{k}\right)}$, where $t_{k}=\frac{n}{\gcd(k, n)}$ and $\phi(n)$ is the Euler's phi function which is the number of positive integers $\leq n$ which are relatively prime to $n$. Also, the eigenvalues of
unitary Cayley graph $X_n$ are $ \lambda_i(X_n)=\displaystyle\sum_{\substack{1 \leq j<n\\ \gcd(j, n)=1}} \omega^{k j}=c(k, n)$, where $\omega$ is a complex primitive $n$-th root of unity.

Let $\mathbb{M}_{m \times n}(\mathbb{R})$ be the set of all $m \times n$ matrices with real entries. The Frobenius norm for $M \in \mathbb{M}_{m \times n}(\mathbb{R})$ is defined as
$$
\|M\|_{F}=\sqrt{\sum_{i=1}^{m} \sum_{j=1}^{n}\left|m_{i j}\right|^{2}}=\sqrt{\operatorname{trace}\left(M^{T} M\right)},
$$
where trace of a square matrix is defined as sum of the elements on the main diagonal. Let $\lambda_{i}$'s be the eigenvalues of the matrix $M$. If $M M^{T}=$ $M^{T} M$, then $\|M\|_{F}^{2}=\displaystyle\sum_{i=1}^{n}\left|\lambda_{i}(M)\right|^{2}$. 
The Zagreb index $\zeta(G)$ of a graph $G$ is defined as the sum of the squares of the vertices degrees, that is, $\zeta(G)=\displaystyle\sum_{v \in V(G)} deg(v)^{2}$.

In real world, modelling a problem using graph gives a regular graph or a non-regular graph. In literature, there are more studies happened in regular graphs compared to the non-regular graph. The study of non-regular graph thus has an significant role in vast area of science and technologies. The UACG $G_n$ is a non-regular graph for odd values of $n$. 
In this paper, the exact $A_\alpha$-eigenvalues of UACG and its complement are found for the values $n={p}^m$ and $n$ even. For the case when $n$ is odd, it's difficult to find an exact value, but we have found a bound for $A_\alpha$-eigenvalues of $G_n$ and its complement. From the $A_\alpha$-eigenvalues, the $A_\alpha$-energy for $G_n$ and $\overline{G_n}$ are also computed for the values $n={p}^m$ and $n$ even. We also introduce the concept of $A_\alpha$-borderenergetic and $A_\alpha$-hyperenergetic graphs and found some graphs which are $A_\alpha$-borderenergetic and $A_\alpha$-hyperenergetic.

\section{Preliminaries}

\begin{definition}\cite{palanivel2019energy}
    The unitary Cayley graph $X_{n}=\operatorname{Cay}\left(Z_{n}, \cup_{n}\right)$ is the graph with vertex set $Z_{n}$, the integers modulo $n$ and $u, v\in Z_n$ are adjacent if and only if $u-v \in \cup_{n}$, where $\cup_{n}$ represents set of all units of the ring $Z_{n}$.\\
    The graph $X_{n}$ is regular with degree $\left|\cup_{n}\right|=\phi\left(n\right)$.
\end{definition}

\begin{definition}\cite{sinha2011some}
    For a positive integer $n>1$, the unitary addition Cayley graph(UACG) $G_{n}=Cay^{+}\left(Z_{n}, \cup_{n}\right)$ is the graph whose vertex set is $Z_{n}=\{0,1,2, \cdots, n-1\}$ and the edge set $E\left(G_{n}\right)=\left\{a b \mid a, b \in Z_{n}, a+b \in \cup_{n}\right\}$ where $\cup_{n}=\left\{a \in Z_{n} \mid\right.$ $\operatorname{gcd}\left(a, n\right)=1\}$.\\
    If $n$ is even, then $G_{n}$ is regular and if $n$ is odd, then $G_{n}$ is semi regular .
\end{definition}

\begin{figure}[H]
\begin{center}
\begin{tikzpicture}  
  [scale=0.5,auto=center] 
 \tikzset{dark/.style={circle,fill=black}}
 \tikzset{hollow/.style={circle,draw=black}}
  \tikzset{white/.style={circle,draw=white}} 
    
  \node [hollow](a0) at (0,5){0} ;  
  \node [dark](a1) at (3.214,3.823)  {\textcolor{white}{1}}; 
  \node [dark] (a2) at (4.924,0.868)  {\textcolor{white}{2}};  
  \node[hollow] (a3) at (4.331,-2.499) {3};  
  \node [dark](a4) at (1.714,-4.697)  {\textcolor{white}{4}};  
  \node [dark](a5) at (-1.713,-4.697)  {\textcolor{white}{5}};  
  \node[hollow] (a6) at (-4.331,-2.499)  {6};  
  \node [dark](a7) at (-4.924,0.868){\textcolor{white}{7}};
  \node[dark] (a8) at (-3.214,3.823) {\textcolor{white}{8}};

%     \node  at (3,0) {(a) $G_1$};
%     \node  at (10,0) {(b) $G_2$};

  \draw (a0) -- (a1);
  \draw (a0) -- (a2);  
  \draw (a0) -- (a4);  
  \draw (a0) -- (a5);  
  \draw (a0) -- (a7);  
  \draw (a0) -- (a8);
  
  \draw (a3) -- (a1);
  \draw (a3) -- (a2);  
  \draw (a3) -- (a4);  
  \draw (a3) -- (a5);  
  \draw (a3) -- (a7);  
  \draw (a3) -- (a8);
  
  \draw (a6) -- (a1);
  \draw (a6) -- (a2);  
  \draw (a6) -- (a4);  
  \draw (a6) -- (a5);  
  \draw (a6) -- (a7);  
  \draw (a6) -- (a8);

  \draw (a1) -- (a4);
  \draw (a1) -- (a7);  
  
  \draw (a2) -- (a5);  
  \draw (a2) -- (a8);
  
  \draw (a4) -- (a7);

  \draw (a5) -- (a8);

\end{tikzpicture}  

\end{center}
\caption{Unitary addition Cayley graph $G_{9}$} \label{uacg}
\end{figure}
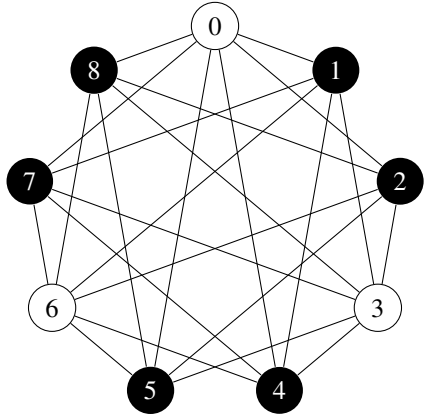

\begin{theorem}\textnormal{\cite{sinha2011some}}
The UACG $G_{n}$ is isomorphic to $X_{n}$, the unitary Cayley graph, if and only if $n$ is even.
\end{theorem}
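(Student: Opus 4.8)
The statement is a biconditional, so I would handle the two implications separately, and the whole proof hinges on one structural fact: the parity map $Z_n \to Z_2$ is a well-defined ring homomorphism precisely when $n$ is even, and in that case every element of $\cup_n$ is odd.

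For the direction ``$n$ even $\Rightarrow G_n \cong X_n$'', the plan is to write down an explicit bijection $f \colon Z_n \to Z_n$ and verify that it carries the edge set of $G_n$ onto that of $X_n$. Since adjacency in $G_n$ is governed by the sum $u+v$ while adjacency in $X_n$ is governed by the difference, I want $f$ to convert one into the other up to sign, which is harmless because $\cup_n$ is closed under negation. I would take $f$ to fix even residues and negate odd residues, that is, $f(v)=v$ when $v$ is even and $f(v)=-v$ when $v$ is odd; this is well-defined and bijective exactly because $n$ is even, being the identity on the even residues and the involution $v\mapsto n-v$ on the odd residues. The verification is a short case check: if $u,v$ have the same parity then $u+v$ is even, hence never in $\cup_n$, and likewise $f(u)-f(v)$ is even; if $u,v$ have opposite parity then a direct computation gives $f(u)-f(v)=\pm(u+v)$. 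In every case $u+v\in\cup_n \iff f(u)-f(v)\in\cup_n$, which is precisely the assertion that $f$ is an isomorphism from $G_n$ onto $X_n$.

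For the converse I would argue by contraposition, using a regularity obstruction rather than any delicate spectral computation. Assume $n$ is odd. Then $2$ is a unit of $Z_n$, so $2v\in\cup_n$ if and only if $v\in\cup_n$; consequently the degree of a vertex $v$ in $G_n$ equals $\phi(n)-1$ when $v\in\cup_n$ (the choice $u=v$ would force the excluded loop value $2v\in\cup_n$) and $\phi(n)$ otherwise. Since both $\cup_n$ and its complement are nonempty for odd $n>1$, the graph $G_n$ carries two distinct vertex degrees and so is not regular; this is exactly the semiregularity recorded just after the definition of $G_n$. As $X_n$ is $\phi(n)$-regular, a non-regular graph cannot be isomorphic to it, whence $G_n\not\cong X_n$.

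The routine part is the case analysis in the forward direction; the single genuinely load-bearing idea, and the place I would be most careful, is the observation that the parity function is well-defined on $Z_n$ only for even $n$. This is simultaneously what makes the bijection $f$ legitimate and what explains why the construction---and, via the degree argument, any isomorphism---must fail when $n$ is odd.
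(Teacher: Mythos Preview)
Your argument is correct in both directions. The explicit parity-twist bijection $f$ is well-defined precisely because $n$ is even, and the four-case check that $f(u)-f(v)\in\cup_n \iff u+v\in\cup_n$ goes through exactly as you describe; the converse via the degree sequence (semiregularity of $G_n$ for odd $n$ versus $\phi(n)$-regularity of $X_n$) is also sound and matches the degree formula the paper quotes as Theorem~2.2.

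As for comparison with the paper: there is nothing to compare against. This theorem appears in the Preliminaries section as a citation of \cite{sinha2011some} with no proof supplied; the present paper only uses the result (chiefly in the remarks that the even-$n$ case reduces to the known unitary Cayley spectrum). Your proof is self-contained and would stand on its own without appeal to that reference.
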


\begin{theorem}\textnormal{\cite{sinha2011some}}
Let $v_i$, $i=0,1,2, \cdots, n-1$, be any vertex of the UACG $G_{n}$. Then
$$
\operatorname{deg}\left(v_i\right)= \begin{cases}\phi\left(n\right) & \text { if } n \text { is even } \\ \phi\left(n\right) & \text { if } n \text { is odd and } \operatorname{gcd}\left(i, n\right) \neq 1 \\ \phi\left(n\right)-1 & \text { if } n \text { is odd and } \operatorname{gcd}\left(i, n\right)=1\end{cases}
$$
\end{theorem}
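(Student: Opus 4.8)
The plan is to count, for a fixed vertex $v_i$, exactly which vertices are adjacent to it and then correct for the one value that must be excluded in a simple graph. By definition $v_i$ is adjacent to $v_j$ precisely when $i+j\in\cup_n$, so the degree of $v_i$ equals the number of indices $j\in Z_n$ with $j\neq i$ and $i+j\in\cup_n$. First I would observe that the map $j\mapsto i+j$ is a bijection of $Z_n$ onto itself; hence as $j$ ranges over all of $Z_n$, the sum $i+j$ hits every residue exactly once, and so the number of $j$ (with no restriction yet) satisfying $i+j\in\cup_n$ is exactly $|\cup_n|=\phi(n)$.

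The only correction needed is to discard the contribution of $j=i$, since loops are not allowed in a simple graph. That value is counted in the $\phi(n)$ above if and only if $i+i=2i\in\cup_n$, i.e. $\gcd(2i,n)=1$. Therefore the degree formula reduces to $\deg(v_i)=\phi(n)-\varepsilon_i$, where $\varepsilon_i=1$ when $2i\in\cup_n$ and $\varepsilon_i=0$ otherwise, and the whole theorem follows once I analyse when $2i$ is a unit.

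For the case analysis I would split on the parity of $n$. If $n$ is even then $2\mid n$ and $2\mid 2i$, so $2\mid\gcd(2i,n)$ and hence $2i\notin\cup_n$ for every $i$; thus $\varepsilon_i=0$ and $\deg(v_i)=\phi(n)$ uniformly. If $n$ is odd then $\gcd(2,n)=1$, and using the elementary fact that multiplying by a number coprime to $n$ leaves the gcd with $n$ unchanged, I get $\gcd(2i,n)=\gcd(i,n)$. Consequently $2i\in\cup_n$ if and only if $\gcd(i,n)=1$, which yields $\deg(v_i)=\phi(n)-1$ when $\gcd(i,n)=1$ and $\deg(v_i)=\phi(n)$ when $\gcd(i,n)\neq1$, matching all three cases of the statement.

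This argument is short and essentially computational; there is no substantial obstacle. The only point requiring care is the self-loop exclusion, namely correctly identifying that the single overcounted index is $j=i$ and translating the loop condition into the arithmetic condition $\gcd(2i,n)=1$. Everything else is a clean bijection count followed by the parity split, so I expect the writeup to be brief.
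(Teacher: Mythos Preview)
Your argument is correct and complete: the translation $j\mapsto i+j$ bijects $Z_n$, so the raw count of admissible $j$ is $\phi(n)$, and the only correction is for the potential loop at $j=i$, which occurs exactly when $\gcd(2i,n)=1$; the parity split then gives all three cases. The paper itself does not supply a proof of this statement---it is quoted as a preliminary from \cite{sinha2011some}---so there is nothing to compare against here.
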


\begin{corollary}\cite{sinha2011some}
The total number of edges in the UACG $G_{n}$ is
$$
\left|E\left(G_{n}\right)\right|= \begin{cases}\frac{1}{2} n \phi\left(n\right) & \text { if } n \text { is even } \\ \frac{1}{2}\left(n-1\right) \phi\left(n\right) & \text { if } n \text { is odd }\end{cases}
$$
\end{corollary}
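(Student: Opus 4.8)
The plan is to derive the edge count from the degree-sum formula (handshaking lemma), which asserts that $\sum_{v \in V(G_n)} \deg(v) = 2\,|E(G_n)|$, combined with the explicit degree sequence furnished by the preceding theorem. The entire argument reduces to evaluating the sum of vertex degrees and then splitting into the two parity cases for $n$.

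First, for $n$ even I would simply use that $G_n$ is regular with every one of its $n$ vertices having degree $\phi(n)$. Summing gives $\sum_{v} \deg(v) = n\,\phi(n)$, and halving yields $|E(G_n)| = \tfrac{1}{2}n\,\phi(n)$ at once.

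For $n$ odd the degree takes two distinct values, so I would partition the vertex set $\{0,1,\dots,n-1\}$ according to whether $\gcd(i,n)=1$. The key counting step is to observe that the number of labels $i$ with $\gcd(i,n)=1$ is exactly $|\cup_n| = \phi(n)$; here I would note explicitly that vertex $0$ satisfies $\gcd(0,n)=n\neq 1$ for $n>1$, so it lies in the complementary class, and hence precisely $n-\phi(n)$ vertices satisfy $\gcd(i,n)\neq 1$. Applying the degree theorem, the former $\phi(n)$ vertices each contribute degree $\phi(n)-1$ and the latter $n-\phi(n)$ vertices each contribute degree $\phi(n)$, so that
$$
\sum_{v}\deg(v)=\phi(n)\bigl(\phi(n)-1\bigr)+\bigl(n-\phi(n)\bigr)\phi(n)=(n-1)\phi(n),
$$
where the two $\phi(n)^2$ terms cancel. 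Dividing by $2$ gives $|E(G_n)| = \tfrac{1}{2}(n-1)\phi(n)$, completing this case.

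There is no genuine obstacle in this corollary; the proof is essentially a one-line degree summation in each case. The only point requiring a modicum of care is the vertex count in the odd case, namely correctly identifying that exactly $\phi(n)$ of the labels $0,\dots,n-1$ are units (with vertex $0$ a non-unit), after which both formulas follow immediately from the handshaking lemma.
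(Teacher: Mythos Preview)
Your argument is correct and is exactly the natural derivation: apply the handshaking lemma to the degree sequence given in the preceding theorem, partitioning vertices in the odd case according to whether the label is a unit. The paper itself does not supply a proof of this corollary --- it is quoted as a preliminary result from \cite{sinha2011some} --- so there is nothing to compare against, but your proof is the standard one and would serve perfectly well as a self-contained justification.
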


\begin{theorem}\textnormal{\cite{ilic2009energy}}
The energy of unitary Cayley graph $X_{n}$ equals $2^{l} \phi\left(n\right)$, where $l$ is the total number of unique prime factors that divides $n$.

\end{theorem}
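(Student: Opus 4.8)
The plan is to compute the energy directly from the spectrum, exploiting the facts recalled in the introduction: $X_n$ is $\phi(n)$-regular, and its adjacency eigenvalues are precisely the Ramanujan sums $\lambda_k(X_n) = c(k,n) = \mu(t_k)\frac{\phi(n)}{\phi(t_k)}$ with $t_k = n/\gcd(k,n)$, for $k = 0, 1, \ldots, n-1$. Since energy is the sum of absolute values of the eigenvalues, the whole problem reduces to evaluating $\varepsilon(X_n) = \sum_{k=0}^{n-1} |c(k,n)|$.

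First I would reorganize this sum by grouping the indices $k$ according to the value of $d = \gcd(k,n)$, equivalently according to $t_k = n/d$. The key observation is that $c(k,n)$ depends on $k$ only through $t_k$, so all indices in a single group contribute the same absolute value. A standard counting fact is that the number of $k \in \{0, 1, \ldots, n-1\}$ with $\gcd(k,n) = d$ equals $\phi(n/d)$; writing $t = n/d$, this says exactly $\phi(t)$ of the indices satisfy $t_k = t$, and as $d$ runs over the divisors of $n$ so does $t$. This converts the sum over $k$ into a sum over divisors $t \mid n$.

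The computation then collapses neatly: for the $\phi(t)$ indices with $t_k = t$ we have $|c(k,n)| = |\mu(t)|\frac{\phi(n)}{\phi(t)}$, so each divisor $t$ contributes $\phi(t)\cdot|\mu(t)|\frac{\phi(n)}{\phi(t)} = \phi(n)\,|\mu(t)|$, the factors of $\phi(t)$ cancelling. Hence $\varepsilon(X_n) = \phi(n)\sum_{t \mid n}|\mu(t)|$. The final step is to identify $\sum_{t\mid n}|\mu(t)|$: since $|\mu(t)| = 1$ exactly when $t$ is squarefree and $0$ otherwise, this sum counts the squarefree divisors of $n$, of which there are $2^l$ when $n$ has $l$ distinct prime factors (each prime is freely included or excluded). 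This yields $\varepsilon(X_n) = 2^l\,\phi(n)$.

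The only genuinely delicate point is the counting and regrouping step, namely establishing that exactly $\phi(t)$ indices share a given $t_k = t$ and that $c(k,n)$ is constant across each such group, together with noticing that $\mu(t_k)$ silently annihilates every non-squarefree divisor, which is what turns $\sum_{t\mid n}|\mu(t)|$ into a clean count of squarefree divisors. Everything after that is an elementary cancellation, so I expect no real obstacle beyond setting up the divisor bookkeeping carefully.
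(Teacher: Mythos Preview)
Your argument is correct and is essentially the standard computation due to Ili\'c: write the eigenvalues as Ramanujan sums, group the indices $k$ by the common value $t=n/\gcd(k,n)$, use that each fibre has size $\phi(t)$ so that the $\phi(t)$'s cancel, and then recognise $\sum_{t\mid n}|\mu(t)|$ as the number $2^l$ of squarefree divisors of $n$.

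Note, however, that the present paper does not give its own proof of this statement; it is quoted in the Preliminaries as a known result from \cite{ilic2009energy}. So there is nothing in the paper to compare against beyond observing that your derivation matches the argument in the cited source.
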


\begin{theorem}\textnormal{\cite{ilic2009energy}}
The complement of unitary Cayley graph $X_{n}$ has energy 
$$
\varepsilon\left(\overline{X_{n}}\right)=2( n-1)+\left(2^{k}-2\right) \phi(n)-\prod_{i=1}^{k} {p}_{i}+\prod_{i=1}^{k}\left(2-{p}_{i}\right),
$$
where $\displaystyle\prod_{i=1}^{k} {p}_{i}$ is the largest divisor of $n$ that is square-free. 
\end{theorem}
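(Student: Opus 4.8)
The plan is to exploit the regularity of $X_n$ to read off the spectrum of $\overline{X_n}$ from that of $X_n$, to express the adjacency eigenvalues of $X_n$ as Ramanujan sums indexed by the divisors of $n$, and then to evaluate the resulting divisor sum via standard multiplicative identities.

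First I would record the eigenvalue relation for the complement of a regular graph. Since $X_n$ is $\phi(n)$-regular, its complement has adjacency matrix $\overline{A}=J_n-I_n-A(X_n)$. The all-ones vector is the eigenvector of $A(X_n)$ with eigenvalue $\phi(n)$, so $\overline{A}\mathbf{1}=(n-1-\phi(n))\mathbf{1}$; any eigenvector $v$ orthogonal to $\mathbf{1}$ with $A(X_n)v=\lambda v$ satisfies $\overline{A}v=-(1+\lambda)v$. Hence the spectrum of $\overline{X_n}$ is $n-1-\phi(n)$ together with the values $-(1+\lambda)$ as $\lambda$ runs over the remaining adjacency eigenvalues of $X_n$. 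Because $\phi(n)\le n-1$ for $n\ge 2$, the term $n-1-\phi(n)$ is nonnegative.

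Next I would organize the eigenvalues of $X_n$ by divisors. The adjacency eigenvalues are the Ramanujan sums $c(k,n)=\mu(t_k)\tfrac{\phi(n)}{\phi(t_k)}$ with $t_k=n/\gcd(k,n)$. As $k$ ranges over $\{0,1,\dots,n-1\}$, the quantity $t_k$ runs over the divisors $d$ of $n$, and for each fixed $d$ the number of indices $k$ with $t_k=d$ (that is, $\gcd(k,n)=n/d$) equals $\phi(d)$. Thus for every divisor $d\mid n$ the value $\mu(d)\tfrac{\phi(n)}{\phi(d)}$ occurs with multiplicity $\phi(d)$, and the eigenvalue $\phi(n)$ corresponds exactly to $d=1$ (the all-ones eigenvector). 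Consequently
$$\varepsilon(\overline{X_n})=(n-1-\phi(n))+\sum_{\substack{d\mid n\\ d>1}}\phi(d)\left|1+\mu(d)\tfrac{\phi(n)}{\phi(d)}\right|.$$

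The main work is the evaluation of this divisor sum, which I would split into three parts. For non-square-free $d$ we have $\mu(d)=0$, so each such term contributes $\phi(d)$; summing and using $\sum_{d\mid n}\phi(d)=n$ together with $\sum_{d\mid P}\phi(d)=P$ (where $P=\prod_{i=1}^{k}p_i$ is the radical) gives the total $n-P$ from these terms. For square-free $d>1$ the sign of $1+\mu(d)\tfrac{\phi(n)}{\phi(d)}$ is governed by the parity of $\omega(d)$; here I would use that $d\mid n$ forces $\phi(d)\mid\phi(n)$, so $\tfrac{\phi(n)}{\phi(d)}\ge 1$ and the absolute values resolve as $\phi(d)+\phi(n)$ when $\omega(d)$ is even and $\phi(n)-\phi(d)$ when $\omega(d)$ is odd. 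Counting $2^{k-1}-1$ square-free divisors $d>1$ with even $\omega(d)$ and $2^{k-1}$ with odd $\omega(d)$, and applying the multiplicative identity $\sum_{d\mid P}\mu(d)\phi(d)=\prod_{i=1}^{k}\bigl(1-\phi(p_i)\bigr)=\prod_{i=1}^{k}(2-p_i)$, the square-free terms sum to $(2^{k}-1)\phi(n)+\prod_{i=1}^{k}(2-p_i)-1$. Adding these contributions to the leading $n-1-\phi(n)$ and simplifying yields the claimed expression $2(n-1)+(2^{k}-2)\phi(n)-\prod_{i=1}^{k}p_i+\prod_{i=1}^{k}(2-p_i)$. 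The principal obstacle is the careful bookkeeping in this last step: correctly resolving the signs inside the absolute values (which hinges on $\phi(d)\mid\phi(n)$) and accurately counting the even- and odd-$\omega(d)$ square-free divisors while excluding $d=1$.
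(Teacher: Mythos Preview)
The paper does not prove this theorem; it is quoted in the Preliminaries with a citation to Ili\'c (2009) and used later only as a known fact. So there is no ``paper's own proof'' to compare against.

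Your argument is correct and is essentially the standard route (and, to my knowledge, the one taken in Ili\'c's original paper): pass to the complement via the regular-graph relation $\lambda\mapsto -1-\lambda$, group the Ramanujan-sum eigenvalues by the divisor $d=t_k$ with multiplicity $\phi(d)$, split into non-square-free and square-free divisors, and evaluate the latter using the multiplicative identity $\sum_{d\mid P}\mu(d)\phi(d)=\prod_i(2-p_i)$. Two small remarks on presentation: (i) for the sign resolution when $\mu(d)=-1$ you only need $\phi(d)\le\phi(n)$, not the stronger divisibility $\phi(d)\mid\phi(n)$ (both hold for $d\mid n$, but the inequality is all that is used); (ii) it is worth making explicit that the square-free divisors of $n$ are precisely the divisors of the radical $P$, which is what justifies $\sum_{d\mid n,\ d\text{ sq-free}}\phi(d)=P$ and hence the $n-P$ contribution from the non-square-free part. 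With those two points spelled out, the bookkeeping in your final paragraph goes through exactly and yields $2(n-1)+(2^k-2)\phi(n)-\prod_i p_i+\prod_i(2-p_i)$.
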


\begin{theorem}\textnormal{\cite{favaron1993some}}\label{aplusb}
Let $M, M_{1}, M_{2}$ be three real symmetric matrices of order $n$ such that $M=M_{1}+M_{2}$. Then the eigenvalues of these matrices satisfy the following inequalities:
for $1 \leq i \leq n$ and $0 \leq j \leq \min \{i-1, n-i\}$,
$$
\lambda_{i-j}\left(M_{1}\right)+\lambda_{1+j}\left(M_{2}\right) \geq \lambda_{i}(M) \geq \lambda_{i+j}\left(M_{1}\right)+\lambda_{n-j}\left(M_{2}\right) \text {. }
$$

\end{theorem}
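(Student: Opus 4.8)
The plan is to recognize the two displayed inequalities as the classical Weyl inequalities for a sum of real symmetric matrices and to establish them through the Courant--Fischer variational characterization of eigenvalues together with a dimension-counting argument. First I would fix orthonormal eigenbases $u_1,\dots,u_n$ of $M_1$, $v_1,\dots,v_n$ of $M_2$, and $w_1,\dots,w_n$ of $M$, each ordered so that the associated eigenvalues are nonincreasing, and recall that for a symmetric matrix $N$ the Rayleigh quotient $x^{T}Nx/x^{T}x$ is bounded above by $\lambda_k(N)$ on the span of the last $n-k+1$ eigenvectors and bounded below by $\lambda_k(N)$ on the span of the first $k$ eigenvectors.

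For the upper bound $\lambda_i(M)\le \lambda_{i-j}(M_1)+\lambda_{1+j}(M_2)$, I would introduce the three subspaces $U=\mathrm{span}\{u_{i-j},\dots,u_n\}$, $V=\mathrm{span}\{v_{1+j},\dots,v_n\}$, and $W=\mathrm{span}\{w_1,\dots,w_i\}$, of dimensions $n-i+j+1$, $n-j$, and $i$ respectively. Their dimensions sum to $2n+1$, so $\dim(U\cap V\cap W)\ge 1$ and there is a nonzero common vector $x$. On $x$ one has $x^{T}Mx\ge\lambda_i(M)\,x^{T}x$, while $x^{T}M_1x\le\lambda_{i-j}(M_1)\,x^{T}x$ and $x^{T}M_2x\le\lambda_{1+j}(M_2)\,x^{T}x$; adding the last two and using $M=M_1+M_2$ yields the claim. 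The lower bound $\lambda_i(M)\ge \lambda_{i+j}(M_1)+\lambda_{n-j}(M_2)$ follows by the mirror construction with $U'=\mathrm{span}\{u_1,\dots,u_{i+j}\}$, $V'=\mathrm{span}\{v_1,\dots,v_{n-j}\}$, and $W'=\mathrm{span}\{w_i,\dots,w_n\}$ (again with dimensions summing to $2n+1$), reversing all the Rayleigh-quotient inequalities; equivalently one may apply the upper bound to $-M=(-M_1)+(-M_2)$ and use $\lambda_k(-N)=-\lambda_{n-k+1}(N)$.

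The two ingredients that need care are the dimension-counting lemma and the index bookkeeping. The lemma $\dim(A\cap B\cap C)\ge \dim A+\dim B+\dim C-2n$ for subspaces of $\mathbb{R}^n$ is obtained by applying $\dim(A\cap B)\ge\dim A+\dim B-n$ twice, and this is really the crux of the argument: it is what forces a single test vector to witness all three eigenvalue estimates simultaneously. The constraint $0\le j\le\min\{i-1,n-i\}$ is exactly what guarantees that every index appearing above lies in $\{1,\dots,n\}$ --- for instance $j\le i-1$ makes $i-j\ge 1$ so that $U$ is nonempty, while $j\le n-i$ makes both $i+j\le n$ and $1+j\le n$. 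Thus the main obstacle is less any deep step than the disciplined verification that all subspaces are well defined and that the Rayleigh bounds point in the correct directions.
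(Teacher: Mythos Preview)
Your argument is correct: this is exactly the classical proof of Weyl's inequalities via Courant--Fischer and the subspace intersection lemma, with the index bounds verified carefully. Note, however, that the paper does not actually give a proof of this theorem --- it is quoted as a preliminary result from \cite{favaron1993some} without argument --- so there is no in-paper proof to compare your approach against. Your proof supplies what the paper omits, and is the standard one.
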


\begin{theorem}\textnormal{\cite{pirzada2021alpha}}\label{bou1}
Let $G$ be a connected graph with $n \geq 3$ vertices, $m$ edges, maximum degree $\Delta$ and Zagreb index $\zeta(G)$. Let $\alpha \in[0,1)$. Then
\begin{enumerate}
\item $\displaystyle\varepsilon_\alpha(G) \geq \sqrt{2\left(\alpha^{2} \zeta(G)+(1-\alpha)^{2}\|A(G)\|_{F}^{2}-\frac{2(\alpha m)^{2}}{n}\right)} .$
    \item $\displaystyle\varepsilon_\alpha(G) \geq 4(1-\alpha) \frac{m}{n} .$\\
    Equality occurs if and only if $G$ is a regular graph with one positive and $n-1$ negative adjacency eigenvalues.
    \item $\displaystyle\varepsilon_\alpha(G) \geq 2 \sqrt{\frac{\zeta(G)}{n}}-\frac{4 \alpha m}{n} .$\\
Equality occurs if and only if $G$ is a regular graph with one positive and $n-1$ negative adjacency eigenvalues.
\item $\displaystyle\varepsilon_\alpha(G) \geq\left(\alpha(\triangle+1)+\sqrt{\alpha^{2}(\triangle+1)^{2}+4 \Delta(1-2 \alpha)}\right)-\frac{4 \alpha m}{n}$\\
with equality if and only if $G \cong K_{1, \Delta}$.
\end{enumerate}

\end{theorem}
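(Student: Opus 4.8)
The plan is to reduce all four bounds to two elementary facts about the shifted eigenvalues $\mu_i:=\lambda_i(A_\alpha(G))-\tfrac{2\alpha m}{n}$: a first-moment and a second-moment identity. Since $\operatorname{trace}(A(G))=0$ and $\operatorname{trace}(D(G))=2m$, we have $\operatorname{trace}(A_\alpha(G))=2\alpha m$, so $\tfrac{2\alpha m}{n}$ is precisely the average $A_\alpha$-eigenvalue and $\sum_{i=1}^{n}\mu_i=0$. For the second moment, expand $A_\alpha(G)^2=\alpha^2D^2+\alpha(1-\alpha)(DA+AD)+(1-\alpha)^2A^2$; as $A$ has zero diagonal, $\operatorname{trace}(DA)=0$, whence $\sum_i\lambda_i^2=\operatorname{trace}(A_\alpha(G)^2)=\alpha^2\zeta(G)+(1-\alpha)^2\|A(G)\|_F^2$, and subtracting the mean gives $\sum_i\mu_i^2=\alpha^2\zeta(G)+(1-\alpha)^2\|A(G)\|_F^2-\tfrac{(2\alpha m)^2}{n}$. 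These two identities drive everything.

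For (1) I would run the classical energy argument on the shifted spectrum. Writing $\varepsilon_\alpha(G)^2=\sum_i\mu_i^2+\sum_{i\neq j}|\mu_i||\mu_j|$ and using the triangle inequality together with the first-moment identity, $\sum_{i\neq j}|\mu_i||\mu_j|\geq\big|\sum_{i\neq j}\mu_i\mu_j\big|=\big|(\sum_i\mu_i)^2-\sum_i\mu_i^2\big|=\sum_i\mu_i^2$. Hence $\varepsilon_\alpha(G)^2\geq 2\sum_i\mu_i^2$, and substituting the second-moment identity yields the stated radical.

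Parts (2)--(4) share one backbone: because $\sum_i\mu_i=0$, the positive and negative parts of the shifted spectrum carry equal mass, so $\varepsilon_\alpha(G)=2\sum_{\mu_i>0}\mu_i\geq 2\mu_1=2\lambda_1(A_\alpha(G))-\tfrac{4\alpha m}{n}$. Each bound then becomes a matching lower estimate for the $A_\alpha$-spectral radius. Since the row sums of both $A$ and $D$ equal the degrees, $A_\alpha(G)\mathbf{1}=\mathbf{d}$, so the Rayleigh quotient at $\mathbf{1}$ gives $\lambda_1(A_\alpha(G))\geq\tfrac{\mathbf{1}^TA_\alpha(G)\mathbf{1}}{n}=\tfrac{2m}{n}$, which is (2); applying the same estimate to $A_\alpha(G)^2$ (whose top eigenvalue is $\lambda_1(A_\alpha(G))^2$, as $A_\alpha(G)$ is entrywise nonnegative and so has $\lambda_1$ as its spectral radius) gives $\lambda_1(A_\alpha(G))^2\geq\tfrac{\|\mathbf{d}\|^2}{n}=\tfrac{\zeta(G)}{n}$, which is (3). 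For (4) I would first evaluate the $A_\alpha$-spectral radius of the star $K_{1,\Delta}$: an eigenvector constant on the leaves reduces the problem to $\lambda^2-\alpha(\Delta+1)\lambda+\Delta(2\alpha-1)=0$, whose larger root is $\tfrac12\big(\alpha(\Delta+1)+\sqrt{\alpha^2(\Delta+1)^2+4\Delta(1-2\alpha)}\big)$, so that $2\lambda_1(A_\alpha(K_{1,\Delta}))$ equals the bracketed expression in (4). Because a maximum-degree vertex and its neighbours span a copy of $K_{1,\Delta}$ as a (not necessarily induced) subgraph, extending its nonnegative Perron vector by zeros and evaluating the Rayleigh quotient for $A_\alpha(G)$ --- where the larger degrees only raise diagonal terms and extra edges contribute nonnegatively --- yields $\lambda_1(A_\alpha(G))\geq\lambda_1(A_\alpha(K_{1,\Delta}))$, which is (4).

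For the equality claims I would trace tightness through each step. The passage $\varepsilon_\alpha(G)=2\sum_{\mu_i>0}\mu_i\geq 2\mu_1$ is sharp exactly when the shifted spectrum has a single positive entry, and the Rayleigh steps in (2) and (3) are sharp precisely when $\mathbf{1}$ is a $\lambda_1$-eigenvector, i.e. when $G$ is regular; for a regular graph $\mu_i=(1-\alpha)\lambda_i(A(G))$, so the single-positive-entry condition becomes ``one positive and $n-1$ negative adjacency eigenvalues,'' as stated. In (4), tightness of the subgraph estimate forces $G$, being connected, to have no vertices or edges beyond the star, hence $G\cong K_{1,\Delta}$, for which $\lambda_{\pm}$ together with $\alpha$ of multiplicity $\Delta-1$ comprise the spectrum and exactly one shifted eigenvalue is positive. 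I expect this equality analysis to be the main obstacle: showing the subgraph bound is strict unless $G$ is exactly the star (where connectivity is essential) and confirming the single-positive-eigenvalue condition for the star throughout $\alpha\in[0,1)$. The inequalities themselves are routine once the two moment identities and the three test-vector estimates are assembled.
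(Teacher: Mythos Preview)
The paper does not prove this theorem; it is quoted as a preliminary result from Pirzada, Rather, Ganie and Shaban \cite{pirzada2021alpha} and used later only as a black box. There is therefore no in-paper proof to compare against.

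Your argument is correct and is essentially the standard one from the source. The two moment identities for the shifted spectrum $\mu_i=\lambda_i(A_\alpha)-\tfrac{2\alpha m}{n}$ are exactly what drive the original proofs, and your derivation of the second moment via $\operatorname{trace}(DA)=0$ is clean. The passage $\varepsilon_\alpha(G)=2\sum_{\mu_i>0}\mu_i\ge 2\mu_1=2\lambda_1(A_\alpha)-\tfrac{4\alpha m}{n}$ followed by Rayleigh-quotient lower bounds for $\lambda_1(A_\alpha)$ is precisely how (2)--(4) are obtained in \cite{pirzada2021alpha}; your choices of test vectors ($\mathbf{1}$ for (2), $\mathbf{1}$ applied to $A_\alpha^2$ for (3), and the Perron vector of the star extended by zeros for (4)) match the intended arguments. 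The equality analyses you outline are also on target: tightness in the Rayleigh step forces regularity, and tightness in $2\sum_{\mu_i>0}\mu_i\ge 2\mu_1$ forces a single positive shifted eigenvalue, which for a regular graph translates to the stated adjacency condition; in (4) strict monotonicity of the Rayleigh quotient under added degrees or edges, together with connectivity, pins $G$ down to $K_{1,\Delta}$. None of this departs from the published route.
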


\begin{theorem}\textnormal{\cite{shanmukha2021bounds}}\label{bou5}
Let $G$ be a graph on $n $ vertices, $m$ edges and Zagreb index $\zeta(G)$. For $\alpha \in[0,1)$, we have
$$\varepsilon_\alpha(G)\leq\sqrt{n(\alpha^2\zeta(G)+(1-\alpha)^2||A(G)||^2_F)-4\alpha^2m^2}$$
\end{theorem}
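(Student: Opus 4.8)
The plan is to reduce the bound to a single application of the Cauchy--Schwarz inequality applied to the shifted $A_\alpha$-eigenvalues $\eta_i = \lambda_i(A_\alpha(G)) - \tfrac{2\alpha m}{n}$, so that by definition $\varepsilon_\alpha(G) = \sum_{i=1}^n |\eta_i|$. The only real content is to evaluate the second moment $\sum_{i=1}^n \eta_i^2$ in closed form; once that is in hand the inequality drops out immediately.

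First I would record the two trace identities for $A_\alpha(G) = \alpha D(G) + (1-\alpha)A(G)$. Since $A(G)$ has zero diagonal, $\operatorname{trace}(A_\alpha(G)) = \alpha\operatorname{trace}(D(G)) = 2\alpha m$, which gives $\sum_i \lambda_i(A_\alpha(G)) = 2\alpha m$. Next, expanding $A_\alpha(G)^2 = \alpha^2 D^2 + \alpha(1-\alpha)(DA+AD) + (1-\alpha)^2 A^2$ and taking traces, the mixed term vanishes because $DA$ has diagonal entries $\deg(v_i)\,a_{ii} = 0$; hence $\sum_i \lambda_i(A_\alpha(G))^2 = \|A_\alpha(G)\|_F^2 = \alpha^2\zeta(G) + (1-\alpha)^2\|A(G)\|_F^2$, using $\operatorname{trace}(D^2) = \zeta(G)$ and $\operatorname{trace}(A^2)=\|A(G)\|_F^2$.

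Then I would compute $\sum_i \eta_i^2 = \sum_i \lambda_i^2 - \tfrac{4\alpha m}{n}\sum_i \lambda_i + \tfrac{4\alpha^2 m^2}{n}$ and substitute the two identities above; the $m^2$ terms combine to leave $\sum_i \eta_i^2 = \alpha^2\zeta(G) + (1-\alpha)^2\|A(G)\|_F^2 - \tfrac{4\alpha^2 m^2}{n}$. Finally, Cauchy--Schwarz against the all-ones vector gives $\varepsilon_\alpha(G) = \sum_i |\eta_i|\cdot 1 \le \sqrt{n\sum_i \eta_i^2}$, and inserting the value of $\sum_i \eta_i^2$ and distributing the factor $n$ inside the radical yields exactly the stated bound.

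I do not expect a serious obstacle: this is a standard "second moment plus Cauchy--Schwarz" energy estimate, entirely parallel to the classical McClelland bound for adjacency energy. The only step needing a moment's care is verifying that the cross term $\operatorname{trace}(DA + AD)$ drops out, which follows from the zero diagonal of the adjacency matrix; everything else is a routine trace computation.
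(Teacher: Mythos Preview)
Your argument is correct and complete: the two trace identities are right (the cross term $\operatorname{trace}(DA+AD)$ indeed vanishes since $A$ has zero diagonal), the computation of $\sum_i\eta_i^2$ is accurate, and Cauchy--Schwarz against the all-ones vector then yields the bound exactly as stated.

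Note, however, that the paper itself does not supply a proof of this theorem; it is quoted as a preliminary result from \cite{shanmukha2021bounds} and used as a black box in the later energy estimates. Your proof is precisely the standard second-moment/Cauchy--Schwarz argument that one would expect the original reference to contain, so there is nothing to compare against within this paper.
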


\begin{lemma}\textnormal{\cite{zhang2006schur}}\label{uvwx}
	Let $M_1,M_2,M_3$ and $M_4$ be matrices with $M_1$  invertible. Let $$\widetilde{M}=\begin{pmatrix}
	M_1&M_2\\
	M_3&M_4
	\end{pmatrix}. 
	$$
	
	Then $\det\left(\widetilde{M}\right)=\det\left(M_1\right)\det\left(X-M_3M_1^{-1}M_2\right)$.
	\medskip
	
	If $M_4$ is invertible,  then $\det\left(\widetilde{M}\right)=\det\left(M_4\right)\det\left(M_1-M_2M_4^{-1}M_3\right).$ 
% 	\medskip
	
% 	If $U$ and $W$ commutes, then $\det\left(M\right)=\det\left(UX-WV\right)$.
\end{lemma}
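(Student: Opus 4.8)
The plan is to prove both determinant identities by exhibiting explicit block-triangular factorizations of $\widetilde{M}$ and then invoking multiplicativity of the determinant together with the standard fact that the determinant of a block-triangular matrix equals the product of the determinants of its diagonal blocks. (I note in passing that the right-hand side of the first identity should read $\det(M_1)\det(M_4-M_3M_1^{-1}M_2)$, the Schur complement of $M_1$, the symbol $X$ being a typo; I would state and prove it in this corrected form.)

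For the first identity, since $M_1$ is invertible I would write
$$\widetilde{M}=\begin{pmatrix} I & 0 \\ M_3M_1^{-1} & I \end{pmatrix}\begin{pmatrix} M_1 & M_2 \\ 0 & M_4-M_3M_1^{-1}M_2 \end{pmatrix},$$
and verify the four block products directly, the only nontrivial one being the bottom-right block, where the two copies of $M_3M_1^{-1}M_2$ cancel to leave $M_4$. Taking determinants, the first (block lower unitriangular) factor has determinant $1$, while the second (block upper-triangular) factor has diagonal blocks $M_1$ and $M_4-M_3M_1^{-1}M_2$; multiplicativity then yields $\det(\widetilde{M})=\det(M_1)\det(M_4-M_3M_1^{-1}M_2)$.

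For the second identity, assuming instead that $M_4$ is invertible, I would use the mirror-image factorization
$$\widetilde{M}=\begin{pmatrix} I & M_2M_4^{-1} \\ 0 & I \end{pmatrix}\begin{pmatrix} M_1-M_2M_4^{-1}M_3 & 0 \\ M_3 & M_4 \end{pmatrix},$$
again checking the blocks (now the top-left block is the one where the cancellation occurs) and taking determinants. The upper unitriangular factor contributes $1$ and the lower block-triangular factor contributes $\det(M_1-M_2M_4^{-1}M_3)\det(M_4)$, giving the claimed formula.

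The only ingredient beyond routine block-matrix arithmetic is the block-triangular determinant rule $\det\begin{pmatrix} A & B \\ 0 & C \end{pmatrix}=\det(A)\det(C)$ for square diagonal blocks $A$ and $C$. This is the main step, and if it is not taken as known I would justify it by a Laplace (cofactor) expansion along the columns meeting the zero block, or equivalently by a further factorization into a block-diagonal piece and a unitriangular piece. Once this rule is in hand, both halves of the lemma follow immediately, and the hypotheses that $M_1$ (respectively $M_4$) be invertible are precisely what guarantee the existence of the inverses appearing in, and hence the legitimacy of, the two factorizations.
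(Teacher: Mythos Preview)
Your proof is correct and is the standard argument for the Schur complement determinant formula; the two block-triangular factorizations you exhibit are exactly right, and your observation that the $X$ in the first identity is a typo for $M_4$ is also correct. Note, however, that the paper does not supply its own proof of this lemma: it is quoted without proof from the cited reference on Schur complements, so there is no in-paper argument to compare yours against.
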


\begin{definition}\textnormal{\cite{cui2012spectrum}} \label{gammad}
	The $M$-coronal $\Gamma_{M}\left(x\right)$  of a matrix $M$ of order $n$ is defined as the sum of the entries of the matrix $\left(xI_n-M\right)^{-1}$ (if it exists), that is, $$\Gamma_{M}\left(x\right)=J_{n\times 1}^T\left(xI_n-M\right)^{-1}J_{n\times 1}.$$ 
	If each row sum of an $n\times n$ matrix $M$ is constant, say t, then 
	$\Gamma_{M}\left(x\right)=\frac{n}{x-t}$.
\end{definition}

\begin{lemma}\textnormal{\cite{liu2019spectra}}\label{IAJ}
    Let $A$ be a real matrix of order $n$. Then
$$
\det\left(y I_{n}-A-c J_{n}\right)=\left(1-c \Gamma_{A}\left(y\right)\right) \det\left(y I_{n}-A\right) .
$$
\end{lemma}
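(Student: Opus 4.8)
The plan is to recognize that $J_n$ is a rank-one matrix and to apply the Schur-complement identity of Lemma \ref{uvwx} to an augmented $(n+1)\times(n+1)$ bordered matrix, computing its determinant in two different ways so that each evaluation reproduces one side of the claimed identity. Writing the all-ones column vector as $\mathbf{j}=J_{n\times 1}$, we have $J_n=\mathbf{j}\,\mathbf{j}^T$, so that $\det(yI_n-A-cJ_n)=\det\bigl((yI_n-A)-c\,\mathbf{j}\,\mathbf{j}^T\bigr)$.

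First I would introduce the block matrix
$$
\widetilde{M}=\begin{pmatrix} yI_n-A & c\,\mathbf{j} \\ \mathbf{j}^T & 1 \end{pmatrix},
$$
with blocks $M_1=yI_n-A$, $M_2=c\,\mathbf{j}$, $M_3=\mathbf{j}^T$, $M_4=1$. Since the bottom-right block $M_4=1$ is invertible, the second formula of Lemma \ref{uvwx} gives
$$
\det(\widetilde{M})=\det(M_4)\,\det\bigl(M_1-M_2M_4^{-1}M_3\bigr)=\det\bigl(yI_n-A-c\,\mathbf{j}\,\mathbf{j}^T\bigr)=\det(yI_n-A-cJ_n).
$$
On the other hand, whenever $M_1=yI_n-A$ is invertible, the first formula of Lemma \ref{uvwx} gives
$$
\det(\widetilde{M})=\det(M_1)\,\det\bigl(M_4-M_3M_1^{-1}M_2\bigr)=\det(yI_n-A)\,\bigl(1-c\,\mathbf{j}^T(yI_n-A)^{-1}\mathbf{j}\bigr).
$$
By Definition \ref{gammad}, $\mathbf{j}^T(yI_n-A)^{-1}\mathbf{j}=\Gamma_A(y)$, so this second evaluation equals $\bigl(1-c\,\Gamma_A(y)\bigr)\det(yI_n-A)$. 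Equating the two expressions for $\det(\widetilde{M})$ yields the identity.

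The only point requiring care is the invertibility hypothesis: the second evaluation presupposes that $yI_n-A$ is nonsingular, that is, that $y$ avoids the finitely many eigenvalues of $A$. This is precisely the domain on which the coronal $\Gamma_A(y)$ is defined, so the identity holds wherever the right-hand side makes sense. If one prefers an equality of characteristic polynomials valid for every $y$, I would observe that after clearing the factor $\det(yI_n-A)$ implicit in $\Gamma_A(y)$ both sides are polynomials in $y$ agreeing on all but finitely many values, hence identically equal. I do not expect a genuine obstacle: the whole argument is the standard matrix-determinant-lemma bordering trick, and the main thing to keep straight is the bookkeeping of which Schur formula (the one using $M_1$ versus the one using $M_4$) produces which side of the equation.
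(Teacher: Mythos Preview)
Your argument is correct: bordering $yI_n-A$ with the all-ones vector and applying Lemma~\ref{uvwx} twice (once via the invertible scalar block $M_4=1$, once via $M_1=yI_n-A$) is exactly the standard matrix-determinant-lemma derivation, and your handling of the invertibility caveat and the polynomial extension to all $y$ is appropriate.

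There is nothing to compare against here: the paper does not supply its own proof of this lemma but simply quotes it from \cite{liu2019spectra} as a preliminary tool. Your proof is a self-contained justification using only the Schur-complement identity already available in the paper (Lemma~\ref{uvwx}) and the definition of the coronal (Definition~\ref{gammad}), which is a nice bonus.
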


\begin{lemma}\textnormal{\cite{bapat2010graphs}}
    Let $A$ and $B$ be symmetric matrices of order $n $ and $m $, respectively. If $\mu_{1}, \cdots, \mu_{n}$ and $\lambda_{1}, \cdots, \lambda_{m}$are the eigenvalues of $A$ and $B$, respectively, then the eigenvalues of $A \otimes B$ are given by $\mu_{j}\lambda_{i}  ; j=1, \cdots, n; i=1, \cdots, m $.
\end{lemma}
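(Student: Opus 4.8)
The plan is to exploit the fact that real symmetric matrices are orthogonally diagonalizable, combined with the mixed-product (compatibility) property of the Kronecker product. First I would invoke the spectral theorem to write $P^{T}AP = D_A$ and $Q^{T}BQ = D_B$, where $P, Q$ are orthogonal matrices and $D_A = \operatorname{diag}(\mu_1, \ldots, \mu_n)$, $D_B = \operatorname{diag}(\lambda_1, \ldots, \lambda_m)$.

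The central tool is the identity $(A \otimes B)(C \otimes D) = (AC) \otimes (BD)$, valid whenever the products $AC$ and $BD$ are defined. Applying it twice, I would compute $(P \otimes Q)^{T}(A \otimes B)(P \otimes Q) = (P^{T}AP) \otimes (Q^{T}BQ) = D_A \otimes D_B$. Since $D_A \otimes D_B$ is diagonal with entries precisely the products $\mu_j \lambda_i$ (as $j$ and $i$ range over all indices), and since $P \otimes Q$ is itself orthogonal — because $(P \otimes Q)^{T}(P \otimes Q) = (P^{T}P) \otimes (Q^{T}Q) = I_n \otimes I_m = I_{nm}$ — the matrix $A \otimes B$ is orthogonally similar to a diagonal matrix whose entries are exactly the $\mu_j \lambda_i$. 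As similar matrices share the same spectrum, this establishes the claim.

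The main obstacle, indeed the only nontrivial ingredient, is the mixed-product property; everything else follows directly. I would verify it by a block computation: writing $A \otimes B = [a_{ij}B]$ and $C \otimes D = [c_{kl}D]$ as block matrices, the $(i,l)$ block of their product is $\sum_k a_{ik} c_{kl}\, BD = (AC)_{il}\, BD$, which is exactly the $(i,l)$ block of $(AC) \otimes (BD)$.

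Alternatively, and perhaps more transparently, I could argue directly on eigenvectors: if $Ax = \mu x$ and $By = \lambda y$, then the mixed-product property gives $(A \otimes B)(x \otimes y) = (Ax) \otimes (By) = \mu\lambda\,(x \otimes y)$, so each $x \otimes y$ is an eigenvector of $A \otimes B$ with eigenvalue $\mu\lambda$. Choosing orthonormal eigenbases $\{x_j\}$ of $A$ and $\{y_i\}$ of $B$ (available by symmetry), the vectors $\{x_j \otimes y_i\}$ form an orthonormal, hence linearly independent, set of size $nm$; they therefore exhaust the spectrum of $A \otimes B$ and account for all products $\mu_j \lambda_i$.
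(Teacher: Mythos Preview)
Your argument is correct and is in fact the standard proof of this classical fact about Kronecker products. Note, however, that the paper does not supply its own proof of this lemma: it is simply quoted as a known result from Bapat's book \cite{bapat2010graphs}, so there is no ``paper's proof'' to compare against. Your diagonalization argument via the mixed-product identity $(A\otimes B)(C\otimes D)=(AC)\otimes(BD)$, together with the eigenvector version $(A\otimes B)(x\otimes y)=\mu\lambda\,(x\otimes y)$, is exactly the proof one finds in standard references such as Bapat's text, so nothing is missing.
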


\section{\texorpdfstring{$A_\alpha$}{}-spectrum and energy of unitary addition Cayley graphs}

This section discusses the $A_\alpha$-spectrum and $A_\alpha$-energy of UACG for even $n$ and $n={p}^m$ where ${p}$ is a prime number. Also, bound for $A_\alpha$-eigenvalues and $A_\alpha$-energy are found for odd $n$. 
\begin{proposition}\label{prop1}
Let $n =  {p}^m$, ${p}\geq3$, $m \geq 1$. Then the $A_\alpha$-spectrum of UACG $G_n$ is 
$$\begin{pmatrix}
{p}^{m-1}\left({p}\alpha-1\right)-1 & \frac{x-y}{2} & {p}^{m-1}\left({p}-1\right)\alpha-1 & \alpha\left({p}^m-{p}^{m-1}\right) & {p}^{m-1}\left(\left({p}-2\right)\alpha+1\right)-1  & \frac{x+y}{2} \\
\frac{{p}-3}{2} & 1 & \left({p}-1\right)\left({p}^{m-1}-1\right) & {p}^{m-1}-1 &  \frac{{p}-1}{2} & 1 
\end{pmatrix},$$
where $x=\left(1+\alpha\right){p}^m-2{p}^{m-1}-1$ and $y= \sqrt{\left(1-{p}^m+\alpha {p}^m\right)^2+\left(1-\alpha\right)4{p}^{m-1}}$.
\end{proposition}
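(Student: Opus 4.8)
The plan is to exploit the fact that for $n = p^m$ with $p$ an odd prime, adjacency in $G_n$ depends only on residues modulo $p$: two distinct vertices $i,j$ satisfy $i+j \in \cup_n$ iff $p \nmid (i+j)$ iff $(i \bmod p) + (j \bmod p) \not\equiv 0 \pmod p$. Thus the $p^{m-1}$ vertices in each residue class behave identically, and $G_n$ is a blow-up of a pattern on the $p$ classes. First I would order the vertices by residue class and record the degrees from the cited degree theorem ($\phi(n)$ for the $p^{m-1}$ non-units in class $0$, and $\phi(n)-1$ for the units). This lets me write the adjacency and degree matrices in tensor form as
\[
A(G_n) = B \otimes J_{p^{m-1}} - \Lambda \otimes I_{p^{m-1}}, \qquad D(G_n) = \phi(n) I - \Lambda \otimes I_{p^{m-1}},
\]
where $B$ is the $p \times p$ pattern matrix with $B_{ab}=1$ iff $a+b \not\equiv 0 \pmod p$, and $\Lambda = \operatorname{diag}(0,1,\dots,1)$ removes the self-loops that $B \otimes J_{p^{m-1}}$ creates on the diagonal blocks. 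Combining these yields the clean expression
\[
A_\alpha(G_n) = \bigl(\alpha\phi(n) I_p - \Lambda\bigr) \otimes I_{p^{m-1}} + (1-\alpha)\, B \otimes J_{p^{m-1}}.
\]

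Next I would split $\mathbb{R}^{p^{m-1}} = \langle \mathbf{1}\rangle \oplus \mathbf{1}^{\perp}$, the eigenspaces of $J_{p^{m-1}}$ for the eigenvalues $p^{m-1}$ and $0$. On the factor $\mathbf{1}^{\perp}$ (dimension $p^{m-1}-1$) the term $B\otimes J_{p^{m-1}}$ annihilates, so $A_\alpha$ restricts to $\alpha\phi(n) I_p - \Lambda = \operatorname{diag}(\alpha\phi(n), \alpha\phi(n)-1,\dots,\alpha\phi(n)-1)$; this produces $\alpha(p^m-p^{m-1})$ with multiplicity $p^{m-1}-1$ and $p^{m-1}(p-1)\alpha - 1$ with multiplicity $(p-1)(p^{m-1}-1)$. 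The remaining spectrum lives on the $p$-dimensional space $I_p \otimes \langle\mathbf{1}\rangle$, where $J_{p^{m-1}}$ acts as the scalar $p^{m-1}$, so the problem collapses to diagonalizing the single $p\times p$ matrix
\[
M = \alpha\phi(n) I_p - \Lambda + (1-\alpha) p^{m-1} B .
\]

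To handle $M$ I would use the negation involution: writing $B = J_p - Q$, where $Q$ is the permutation matrix of $a \mapsto -a \pmod p$, and $\Lambda = I_p - E_{00}$, I pass to the real Fourier basis $\{\mathbf 1\}\cup\{c_k,s_k\}$ that diagonalizes $Q$. On the antisymmetric vectors $s_k$ (where $Qs_k=-s_k$, $J_p s_k = 0$, $E_{00}s_k = 0$), $M$ acts as the scalar $p^{m-1}\bigl((p-2)\alpha+1\bigr)-1$, giving that eigenvalue with multiplicity $(p-1)/2$. On the symmetric space (where $Q=+I$), $M$ reduces to $\rho I + E_{00} + \delta J_p$ with $\rho = p^{m-1}(p\alpha-1)-1$ and $\delta = (1-\alpha)p^{m-1}$; since $E_{00}+\delta J_p$ has rank $2$ with invariant image $\langle e_0,\mathbf 1\rangle$, it contributes $\rho$ with multiplicity $(p-3)/2$ together with the two eigenvalues of the $2\times 2$ compression $\left(\begin{smallmatrix} 1 & 1 \\ \delta & \delta p\end{smallmatrix}\right)$ in the basis $\{e_0,\mathbf 1\}$.

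The crux — and the step I expect to be most delicate — is this last $2\times 2$ computation together with matching it to the stated $\tfrac{x\pm y}{2}$. Its eigenvalues are $\tfrac12\bigl((1+\delta p)\pm\sqrt{(1+\delta p)^2-4\delta(p-1)}\,\bigr)$, and shifting by $\rho$ should reproduce $\tfrac{x\pm y}{2}$ exactly. A short calculation rewrites $2\rho + 1 + \delta p = (1+\alpha)p^m - 2p^{m-1} - 1 = x$ and $(1+\delta p)^2 - 4\delta(p-1) = (1-\delta p)^2 + 4\delta = (1 - p^m + \alpha p^m)^2 + 4(1-\alpha)p^{m-1} = y^2$, which is precisely the claim. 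Finally I would confirm that the multiplicities $(p^{m-1}-1) + (p-1)(p^{m-1}-1) + \tfrac{p-1}{2} + \tfrac{p-3}{2} + 1 + 1$ sum to $p^m$, closing the proof.
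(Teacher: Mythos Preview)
Your argument is correct, and it takes a genuinely different route from the paper. The paper reorders vertices into a $3\times 3$ block form, then computes the characteristic polynomial mechanically via Schur complements (Lemma~\ref{uvwx}) and the $M$-coronal identity (Lemma~\ref{IAJ}), factoring a large determinant by hand. You instead exploit the tensor structure from the outset: writing $A_\alpha(G_n)=(\alpha\phi(n)I_p-\Lambda)\otimes I_{p^{m-1}}+(1-\alpha)\,B\otimes J_{p^{m-1}}$ and splitting along the eigenspaces of $J_{p^{m-1}}$ immediately peels off the two ``bulk'' eigenvalues with their multiplicities, and the residual $p\times p$ problem is then reduced by the involution $a\mapsto -a$ to a rank-two perturbation of a scalar. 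This is cleaner and more conceptual: it explains \emph{why} the spectrum has exactly six distinct values (two from the $\mathbf 1^\perp$ fibre, one from the antisymmetric block, and three from the symmetric block where a rank-two operator lives), whereas the paper's determinant calculation obtains the same factorisation but obscures this structure. The trade-off is that the paper's Schur-complement machinery is reusable verbatim for the complement $\overline{G_{p^m}}$ in Proposition~\ref{prop2}, while your symmetry argument would need to be re-run (though it adapts without difficulty). Your verification that $2\rho+1+\delta p=x$ and $(1-\delta p)^2+4\delta=y^2$ is correct, and the multiplicity count closes as you note.
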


\begin{proof}

The $A_\alpha$-matrix of UACG of order ${p}^m$ after proper labelling is
$A_\alpha\left(G_{{p}^m}\right)=J_{{p}^{m-1}}\otimes C-I_{{p}^{m-1}}\otimes\left(C-B\right)$,\\
where $B=\begin{bmatrix}
\alpha \phi\left({{p}^m}\right) & \left(1-\alpha\right)J_{1\times {p}-1}\\
\left(1-\alpha\right)J_{{p}-1\times 1} & \left(1-\alpha\right)(J-\widetilde{D})-\left(1-\alpha\phi\left({{p}^m}\right)\right)I_{{p}-1}
\end{bmatrix}$ and
$C=\left(1-\alpha\right)\begin{bmatrix}
0 & J_{1\times {p}-1}\\
J_{{p}-1\times 1} & J_{{p}-1}-\widetilde{D}
\end{bmatrix}$.\\

The matrix $A_\alpha(G_{{p}^m})$ is permutationally similar to
$$\hat{A}_\alpha\left(G_{{p}^m}\right)=\begin{bmatrix}
\alpha \phi\left({{p}^m}\right) I_{{p}^{m-1}} & \left(1-\alpha\right)J_{{p}^{m-1}\times\frac{{p}^m-{p}^{m-1}}{2}} & \left(1-\alpha\right)J_{{p}^{m-1}\times\frac{{p}^m-{p}^{m-1}}{2}}\\
\left(1-\alpha\right)J_{\frac{{p}^m-{p}^{m-1}}{2}\times {p}^{m-1}} & \left(1-\alpha\right)J+\left(\alpha \phi\left({{p}^m}\right)-1\right)I & \left(1-\alpha\right)J-\left(1-\alpha\right)\widetilde{D}\otimes J \\
\left(1-\alpha\right)J_{\frac{{p}^m-{p}^{m-1}}{2}\times {p}^{m-1}} &\left(1-\alpha\right)J-\left(1-\alpha\right)\widetilde{D}\otimes J & \left(1-\alpha\right)J+\left(\alpha \phi\left({{p}^m}\right)-1\right)I
\end{bmatrix},$$ where $\widetilde{D}$ is the anti-diagonal matrix with non-zero entries one of order $\frac{{p}-1}{2}.$\\

Then $A_\alpha\left({G_{{p}^m}}\right)=P \hat{A_\alpha}\left({G_{{p}^m}}\right) P^{-1}$, \\
where
$P=\left[\begin{array}{lllllllllllll}A_{11}^{T} & A_{12}^{T} & \cdots & A_{1 {p}^{m-2}}^{T} & A_{21}^{T} & A_{22}^{T} & \cdots & A_{2 {p}^{m-2}}^{T} & \cdots & A_{{p} 1}^{T} & A_{{p} 2}^{T} & \cdots & A_{{p} {p}^{m-2}}^{T}\end{array}\right]$ is a permutation matrix of order ${p}^{m}, m \geq 2,$
$$ A_{i j}=\left[a_{s t}\right], a_{s t}= \begin{cases}1 & \text { if }\left(s ,t\right) \in H_{i j} \\ 0 & \text { otherwise }\end{cases}$$

$s=1,2, \cdots, {p}$;  $ t=1,2, \cdots, {p}^{m}$ and\\ $H_{i j}=\{\left(1, i+\left(j-1\right) {p}\right),\left(2, i+\left(j-1\right) {p}+{p}^{m-1}\right), \cdots,\left({p}, i+\left(j-1\right) {p}+\left({p}-1\right) {p}^{m-1}\right)\},\\ i=1,2, \cdots, {p}, j=1,2, \cdots, {p}^{m-2}$.\\
$P$ becomes the identity matrix when $m=1$.\\

$|\lambda I-\hat{A}_\alpha\left(G_{{p}^m}\right)|=$\\
\medskip

$\begin{vmatrix}
\left(\lambda-\alpha \phi\left({{p}^m}\right) \right)I_{{p}^{m-1}} & -\left(1-\alpha\right)J_{{p}^{m-1}\times\frac{{p}^m-{p}^{m-1}}{2}} & -\left(1-\alpha\right)J_{{p}^{m-1}\times\frac{{p}^m-{p}^{m-1}}{2}}\\
-\left(1-\alpha\right)J_{\frac{{p}^m-{p}^{m-1}}{2}\times {p}^{m-1}} & \left(\lambda -\alpha \phi\left({{p}^m}\right)+1\right)I-\left(1-\alpha\right)J & -\left(1-\alpha\right)J+\left(1-\alpha\right)D\otimes J \\
-\left(1-\alpha\right)J_{\frac{{p}^m-{p}^{m-1}}{2}\times {p}^{m-1}} &-\left(1-\alpha\right)J+\left(1-\alpha\right)D\otimes J & \left(\lambda -\alpha \phi\left({{p}^m}\right)+1\right)I-\left(1-\alpha\right)J
\end{vmatrix}.$\\
\medskip

By Lemma \ref{uvwx},\\
\medskip

$|\lambda I-\hat{A}_\alpha\left(G_{{p}^m}\right)|=\left(\lambda-\alpha\phi\left({{p}^m}\right)\right)^{{p}^{m-1}}\times \det{S},$
$$\text{where } S=\begin{bmatrix}
\left(\lambda -\alpha \phi\left({{p}^m}\right)+1\right)I-\left(1-\alpha\right)J & -\left(1-\alpha\right)J+\left(1-\alpha\right)\widetilde{D}\otimes J \\
-\left(1-\alpha\right)J+\left(1-\alpha\right)\widetilde{D}\otimes J & \left(\lambda -\alpha \phi\left({{p}^m}\right)+1\right)I-\left(1-\alpha\right)J
\end{bmatrix}-\frac{{p}^{m-1}\left(1-\alpha\right)^2}{\lambda-\alpha \phi\left({{p}^m}\right)}J_{{p}^m-{p}^{m-1}}$$

\begin{align*}
\det{S}&=\bigg|\left(\lambda -\alpha \phi\left({{p}^m}\right)+1\right)I-\left(1-\alpha\right)J  -\left(1-\alpha\right)J+\left(1-\alpha\right)\widetilde{D}\otimes J- \frac{2{p}^{m-1}\left(1-\alpha\right)^2}{\lambda-\alpha \phi\left({{p}^m}\right)}J\bigg|\\ &\hspace{4cm}\times\bigg|\left(\lambda -\alpha \phi\left({{p}^m}\right)+1\right)I-\left(1-\alpha\right)J + \left(1-\alpha\right)J-\left(1-\alpha\right)\widetilde{D}\otimes J\bigg|.\\
\intertext{Using Lemma \ref{IAJ} and Definition \ref{gammad}, we get} 
% &=\bigg|\left(\lambda -\alpha \phi\left(n\right)+1\right)I+\left(1-\alpha\right)D\otimes J-2\left(1-\alpha+ \frac{{p}^{m-1}\left(1-\alpha\right)^2}{\lambda-\alpha \phi\left(n\right)}\right)J\bigg|\\
% &\hspace{8cm}\bigg|\left(\lambda -\alpha \phi\left(n\right)+1\right)I-\left(1-\alpha\right)D\otimes J\bigg|\\
\det{S}&=\left(1-2\left(1-\alpha+ \frac{{p}^{m-1}\left(1-\alpha\right)^2}{\lambda-\alpha \phi\left({{p}^m}\right)}\right)\Gamma_{-\left(1-\alpha\right)\widetilde{D}\otimes J}\left(\lambda -\alpha \phi\left({{p}^m}\right)+1\right)\right)\\
&\hspace{2.5cm}\times\bigg|\left(\lambda -\alpha \phi\left({{p}^m}\right)+1\right)I+\left(1-\alpha\right)\widetilde{D}\otimes J\bigg|\bigg|\left(\lambda -\alpha \phi\left({{p}^m}\right)+1\right)I-\left(1-\alpha\right)\widetilde{D}\otimes J\bigg| \\
% &=\left(1-2\left(1-\alpha+ \frac{{p}^{m-1}\left(1-\alpha\right)^2}{\lambda-\alpha \phi\left(n\right)}\right)\frac{\frac{{p}^m-{p}^{m-1}}{2}}{\lambda -\alpha \phi\left(n\right)+1+\left(1-\alpha\right){p}^{m-1}}\right)\\
% &\hspace{8cm}\begin{vmatrix}
% \left(\lambda -\alpha \phi\left(n\right)+1\right)I & \left(1-\alpha\right)D\otimes J\\
% \left(1-\alpha\right)D\otimes J & \left(\lambda -\alpha \phi\left(n\right)+1\right)I
% \end{vmatrix}\\
&=\left(1-\left(1-\alpha+ \frac{{p}^{m-1}\left(1-\alpha\right)^2}{\lambda-\alpha \phi\left({{p}^m}\right)}\right)\frac{{p}^m-{p}^{m-1}}{\lambda -\alpha \phi\left({{p}^m}\right)+1+\left(1-\alpha\right){p}^{m-1}}\right)\\
&\hspace{8cm}\bigg|\left(\lambda -\alpha \phi\left({{p}^m}\right)+1\right)^2I-\left(1-\alpha\right)^2\left(\widetilde{D}\otimes J\right)^2\bigg|.
\intertext{Therefore,}
|\lambda I-&\hat{A}_\alpha\left(G_{{p}^m}\right)|=\left(\lambda-\alpha\phi\left({{p}^m}\right)\right)^{{p}^{m-1}}
\left(1-\left(1-\alpha+ \frac{{p}^{m-1}\left(1-\alpha\right)^2}{\lambda-\alpha \phi\left({{p}^m}\right)}\right)\frac{{p}^m-{p}^{m-1}}{\lambda -\alpha \phi\left({{p}^m}\right)+1+\left(1-\alpha\right){p}^{m-1}}\right)\\
&\hspace{6cm}\bigg|\left(\lambda -\alpha \phi\left({{p}^m}\right)+1\right)^2I-\left(1-\alpha\right)^2\left(\widetilde{D}\otimes J\right)^2\bigg|\\
&=\left(\lambda-\alpha\phi\left({{p}^m}\right)\right)^{{p}^{m-1}} 
\left(1-\frac{\left(\left(1-\alpha\right)\left(\lambda-\alpha \phi\left({{p}^m}\right)\right)+ {p}^{m-1}\left(1-\alpha\right)^2\right)\left({p}^m-{p}^{m-1}\right)}{\left(\lambda-\alpha \phi\left({{p}^m}\right)\right)\left(\lambda -\alpha \phi\left({{p}^m}\right)+1+\left(1-\alpha\right){p}^{m-1}\right)}\right)\\
&\hspace{2cm}\displaystyle\prod_{i=1}^{\frac{{p}-1}{2}}\left(\left(\lambda-\alpha\phi\left({{p}^m}\right)+1\right)^2-\left(1-\alpha\right)^2\left({p}^{m-1}\mu_i\right)^2\right)\left(\lambda-\alpha\phi\left({{p}^m}\right)+1\right)^{{p}^m-{p}^{m-1}-{p}+1},\\
\intertext{where $\mu_i$'s are the eigenvalues of $\widetilde{D}$ and they are $\pm 1$.}
|\lambda I-&\hat{A}_\alpha\left(G_{{p}^m}\right)|=\left(\lambda-\alpha\phi\left({{p}^m}\right)\right)^{{p}^{m-1} }
\left(1-\frac{\left(\left(1-\alpha\right)\left(\lambda-\alpha \phi\left({{p}^m}\right)\right)+ {p}^{m-1}\left(1-\alpha\right)^2\right)\left({p}^m-{p}^{m-1}\right)}{\left(\lambda-\alpha \phi\left({{p}^m}\right)\right)\left(\lambda -\alpha \phi\left({{p}^m}\right)+1+\left(1-\alpha\right){p}^{m-1}\right)}\right)\\
&\hspace{3cm}\left(\left(\lambda-\alpha\phi\left({{p}^m}\right)+1\right)^2-\left(1-\alpha\right)^2\left({p}^{m-1}\right)^2\right)^{\frac{{p}-1}{2}}\left(\lambda-\alpha\phi\left({{p}^m}\right)+1\right)^{{p}^m-{p}^{m-1}-{p}+1}
\end{align*}
% =&\left(\lambda-\alpha\phi\left(n\right)\right)^{{p}^{m-1}}
% \left(1-\frac{\left(\left(1-\alpha\right)\left(\lambda-\alpha \phi\left(n\right)\right)+ p\left(1-\alpha\right)^2\right)\left({p}^m-{p}^{m-1}\right)}{\left(\lambda-\alpha \phi\left(n\right)\right)\left(\lambda -\alpha \phi\left(n\right)+1+\left(1-\alpha\right){p}^{m-1}\right)}\right)\\
% &\left(\lambda-\alpha\phi\left(n\right)+1+\left(1-\alpha\right){p}^{m-1}\right)^{\frac{{p}-1}{2}}
% \left(\lambda-\alpha\phi\left(n\right)+1-\left(1-\alpha\right){p}^{m-1}\right)^{\frac{{p}-1}{2}}
% \left(\lambda-\alpha\phi\left(n\right)+1\right)^{{p}^m-{p}^{m-1}-{p}+1}\\
\begin{align*}
    &=\left(\lambda-\alpha\phi\left({{p}^m}\right)\right)^{{{p}^{m-1}-1} }\left(\lambda-\alpha\phi\left({{p}^m}\right)+1+\left(1-\alpha\right){p}^{m-1}\right)^{\frac{{p}-3}{2}}
\left(\lambda-\alpha\phi\left({{p}^m}\right)+1-\left(1-\alpha\right){p}^{m-1}\right)^{\frac{{p}-1}{2}}
\\
&\left(\left(\lambda-\alpha \phi\left({{p}^m}\right)\right)\left(\lambda -\alpha \phi\left({{p}^m}\right)+1+\left(1-\alpha\right){p}^{m-1}\right)-\left(1-\alpha\right)\left(\left(\lambda-\alpha \phi\left({{p}^m}\right)\right)+{p}^{m-1}\left(1-\alpha\right)\right)\left({p}^m-{p}^{m-1}\right)\right)\\
&\hspace{10cm}\left(\lambda-\alpha\phi\left({{p}^m}\right)+1\right)^{{p}^m-{p}^{m-1}-{p}+1}.
\end{align*}

Thus, the $A_\alpha$-spectrum of $G_{{p}^m}$ is\\ $\operatorname{spec}_{A_\alpha}\left(G_{{p}^m}\right)=$
$$\begin{pmatrix}
{p}^{m-1}\left({p}\alpha-1\right)-1 & \frac{x-y}{2} & {p}^{m-1}\left({p}-1\right)\alpha-1 & \alpha\left({p}^m-{p}^{m-1}\right) & {p}^{m-1}\left(\left({p}-2\right)\alpha+1\right)-1  & \frac{x+y}{2} \\
\frac{{p}-3}{2} & 1 & \left({p}-1\right)\left({p}^{m-1}-1\right) & {p}^{m-1}-1 &  \frac{{p}-1}{2} & 1 
\end{pmatrix},$$
where $x=\left(1+\alpha\right){p}^m-2{p}^{m-1}-1$ and $y= \sqrt{\left(1-{p}^m+\alpha {p}^m\right)^2+\left(1-\alpha\right)4{p}^{m-1}}$.

\end{proof}

Next we calculate the $A_\alpha$-energy of UACG $G_n$ for $n={p}^m$.
\begin{corollary}
The $A_\alpha$-energy of UACG $G_{{p}^m}$, ${p}\geq3$ is\\
\medskip
\begin{align*}
\varepsilon_\alpha\left(G_{{p}^m}\right)=&\frac{3{p}^{m}-5{p}^{m-1}-{p}-1}{2}+\frac{\alpha}{2p}\left(-3{p}^{m+1}+9{p}^m-4{p}^{m-1}+{p}^2-2{p}+1\right)+ \\
&\frac{{p}-1}{2{p}}\left|\left({p}^m-{p}\right)\left(1-\alpha\right)-\alpha\right| +
\left|\frac{x-y}{2}-\frac{\alpha\left({p}^m-1\right)\left({p}-1\right)}{{p}}\right|+  \left|\frac{x+y}{2}-\frac{\alpha\left({p}^m-1\right)\left({p}-1\right)}{{p}}\right| ,
\end{align*}
where $x=\left(1+\alpha\right){p}^m-2{p}^{m-1}-1$ and $y= \sqrt{\left(1-{p}^m+\alpha {p}^m\right)^2+\left(1-\alpha\right)4{p}^{m-1}}$.

\end{corollary}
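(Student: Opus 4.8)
The plan is to feed the spectrum of Proposition~\ref{prop1} straight into the definition $\varepsilon_\alpha(G)=\sum_{i=1}^n\bigl|\lambda_i(A_\alpha(G))-\tfrac{2\alpha m}{n}\bigr|$. First I would fix the shift. Since $n=p^m$ is odd, the edge count for odd order gives $m=\tfrac12(p^m-1)\phi(p^m)=\tfrac12(p^m-1)p^{m-1}(p-1)$, so $\tfrac{2\alpha m}{n}=\tfrac{\alpha(p^m-1)(p-1)}{p}$; write $\tau$ for this quantity. The energy is then $\varepsilon_\alpha(G_{p^m})=\sum_i m_i\,|\lambda_i-\tau|$ over the six distinct eigenvalues listed in Proposition~\ref{prop1} with their multiplicities.

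Next I would compute $\lambda_i-\tau$ for each eigenvalue and clear the denominator $p$. The crucial observation is that every such difference is affine in $\alpha$, so its sign on $[0,1)$ is decided by the two endpoint values at $\alpha=0$ and $\alpha\to 1$. A short reduction yields, for $\lambda=p^{m-1}(p\alpha-1)-1$, the expression $p(\lambda-\tau)=\alpha(p^m+p-1)-(p^m+p)$, which is negative at both endpoints, hence negative throughout; for $\lambda=p^{m-1}(p-1)\alpha-1$ one gets $p(\lambda-\tau)=(p-1)\alpha-p<0$; and for $\lambda=\alpha(p^m-p^{m-1})$ one gets $p(\lambda-\tau)=(p-1)\alpha\ge 0$. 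Thus the absolute values of these three contributions open with a determined sign, with multiplicities $\tfrac{p-3}{2}$, $(p-1)(p^{m-1}-1)$, and $p^{m-1}-1$ respectively.

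For the eigenvalue $p^{m-1}((p-2)\alpha+1)-1$ the same reduction gives $p(\lambda-\tau)=(p^m-p)(1-\alpha)-\alpha$, whose sign genuinely varies over the range of parameters (it is $\ge 0$ at $\alpha=0$ but $<0$ as $\alpha\to 1$ once $m\ge 2$), so this term must be kept as $\tfrac{p-1}{2p}\bigl|(p^m-p)(1-\alpha)-\alpha\bigr|$. Likewise, because $y$ carries a square root, the two simple eigenvalues $\tfrac{x\pm y}{2}$ cannot be assigned a uniform sign and are retained as the summands $\bigl|\tfrac{x-y}{2}-\tau\bigr|$ and $\bigl|\tfrac{x+y}{2}-\tau\bigr|$, matching the stated formula.

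Finally I would collect the three sign-determined contributions, split each into its $\alpha$-free and $\alpha$-linear parts, and simplify: the constant parts should collapse to $\tfrac{3p^m-5p^{m-1}-p-1}{2}$, while the linear parts (after combining the $\lambda_3$ and $\lambda_4$ contributions, which partially cancel) should collapse to $\tfrac{\alpha}{2p}\bigl(-3p^{m+1}+9p^m-4p^{m-1}+p^2-2p+1\bigr)$. Together with the three retained absolute-value terms this gives the claimed expression. The main obstacle is the sign bookkeeping of the second step: one must verify that the three ``closed-form'' differences keep a fixed sign across all admissible $\alpha,p,m$, which the affine-in-$\alpha$ endpoint test settles cleanly, while resisting the temptation to resolve the genuinely sign-indefinite terms and instead leaving them inside absolute values.
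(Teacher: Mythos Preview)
Your proposal is correct and follows essentially the same route as the paper: compute the shift $\tau=\alpha(p^m-1)(p-1)/p$, write $\varepsilon_\alpha(G_{p^m})$ as the weighted sum of $|\lambda_i-\tau|$ over the six eigenvalues of Proposition~\ref{prop1}, resolve the signs of exactly the same three contributions (multiplicities $\tfrac{p-3}{2}$, $(p-1)(p^{m-1}-1)$, $p^{m-1}-1$), keep the remaining three inside absolute values, and simplify. Your affine-in-$\alpha$ endpoint test is a small but welcome addition over the paper, which simply asserts the signs without justification.
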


\begin{proof}
The number of vertices and edges of $G_{{p}^m}$ is ${p}^m$ and $\frac{\left({p}^m-1\right)\left({p}^m-{p}^{m-1}\right)}{2}$ respectively. Thus,
\begin{align*}
\varepsilon_\alpha\left(G_{{p}^m}\right)=&\frac{{p}-3}{2}\left|{p}^{m-1}\left({p}\alpha-1\right)-1 -\frac{\alpha\left({p}^m-1\right)\left({p}-1\right)}{{p}}\right| + \left|\frac{x-y}{2}-\frac{\alpha\left({p}^m-1\right)\left({p}-1\right)}{{p}}\right|\\ & +\left({p}-1\right)\left({p}^{m-1}-1\right)\left|{p}^{m-1}\left({p}-1\right)\alpha-1-\frac{\alpha\left({p}^m-1\right)\left({p}-1\right)}{{p}}\right|\\
&+ \left({p}^{m-1}-1\right)\left|\alpha\left({p}^m-{p}^{m-1}\right)-\frac{\alpha\left({p}^m-1\right)\left({p}-1\right)}{{p}}\right|\\
&+   \frac{{p}-1}{2}\left|{p}^{m-1}\left(\left({p}-2\right)\alpha+1\right)-1-\frac{\alpha\left({p}^m-1\right)\left({p}-1\right)}{{p}}\right| + \left|\frac{x+y}{2}-\frac{\alpha\left({p}^m-1\right)\left({p}-1\right)}{{p}}\right| \\
=&\frac{{p}-3}{2}\left(-{p}^{m-1}\left({p}\alpha-1\right)+1 +\frac{\alpha\left({p}^m-1\right)\left({p}-1\right)}{{p}}\right)\\  &+ \left({p}-1\right)\left({p}^{m-1}-1\right)\left(-{p}^{m-1}\left({p}-1\right)\alpha+1+\frac{\alpha\left({p}^m-1\right)\left({p}-1\right)}{{p}}\right) \\
&+ \left({p}^{m-1}-1\right)\left(\alpha\left({p}^m-{p}^{m-1}\right)-\frac{\alpha\left({p}^m-1\right)\left({p}-1\right)}{{p}}\right)+ \left|\frac{x-y}{2}-\frac{\alpha\left({p}^m-1\right)\left({p}-1\right)}{{p}}\right|\\
&+\frac{{p}-1}{2}\left|{p}^{m-1}\left(\left({p}-2\right)\alpha+1\right)-1-\frac{\alpha\left({p}^m-1\right)\left({p}-1\right)}{{p}}\right| + \left|\frac{x+y}{2}-\frac{\alpha\left({p}^m-1\right)\left({p}-1\right)}{{p}}\right| \\
=&\frac{3{p}^{m}-5{p}^{m-1}-{p}-1}{2}+\frac{\alpha}{2p}\left(-3{p}^{m+1}+9{p}^m-4{p}^{m-1}+{p}^2-2{p}+1\right)\\
&\hspace{2cm}+\frac{{p}-1}{2p}\left|\left({p}^m-p\right)\left(1-\alpha\right)-\alpha\right| +
\left|\frac{x-y}{2}-\frac{\alpha\left({p}^m-1\right)\left({p}-1\right)}{{p}}\right|\\
&\hspace{4cm}+  \left|\frac{x+y}{2}-\frac{\alpha\left({p}^m-1\right)\left({p}-1\right)}{{p}}\right| ,
\end{align*}
where $x=\left(1+\alpha\right){p}^m-2{p}^{m-1}-1$ and $y= \sqrt{\left(1-{p}^m+\alpha {p}^m\right)^2+\left(1-\alpha\right)4{p}^{m-1}}.$
\end{proof}

\begin{remark}
If $n$ is even, then $G_n$ is regular and its $A_\alpha$-spectrum will be $ \alpha \phi\left(n\right)+\left(1-\alpha\right)\lambda_i\left(A\left(G_n\right)\right), i=1,2,\cdots,n$. Its $A_\alpha$-energy will be $\varepsilon_\alpha\left(G_n\right)=2^k\left(1-\alpha\right)\phi\left(n\right)$, where $k$ is the number of distinct prime factors dividing $n$ and $\alpha\in\left[0,1\right)$. 
\end{remark}

% \begin{proposition}
% Let $n$ be odd. Then the largest eigenvalue of the unitary addition Cayley
% graph $G_n$ satisfy the following inequality:
% $$\alpha\phi\left(n\right)+\lambda_n(A(G_n))\leq\lambda_1\left(A_\alpha\left(G_n\right)\right)\leq\alpha\phi\left(n\right)+\lambda_1\left(A(G_n)\right) .$$
% \end{proposition}

In general, computing exact eigenvalues of UACG for odd values of $n$ is a tedious work. But, in the following proposition, we determine a bound for $A_\alpha$-eigenvalues of UACG $G_n$ when $n$ is odd.
\begin{proposition}
	For an odd integer $n$, the $A_{\alpha}$-spectrum of the UACG $G_{n}$ satisfy the following inequalities:\\ $$(1-\alpha)\mu(t_{k})\frac{\phi(n)}{\phi(t_{k})}+\alpha\phi(n)-1\le \lambda_k\left(A_\alpha\left(G_n\right)\right)\le (1-\alpha)\mu(t_{k})\frac{\phi(n)}{\phi(t_{k})}+\alpha\phi(n)$$ for $1\le k\le \frac{(n+1)}{2}$ and $$-(1-\alpha)\mu(t_{k})\frac{\phi(n)}{\phi(t_{k})}+\alpha\phi(n)-1\le \lambda_k\left(A_\alpha\left(G_n\right)\right)\le -(1-\alpha)\mu(t_{k})\frac{\phi(n)}{\phi(t_{k})}+\alpha\phi(n)$$ for $\frac{(n+3)}{2} \le k\le n$.
\end{proposition}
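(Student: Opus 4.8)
The plan is to realize $A_\alpha(G_n)$, for odd $n$, as a one-sided rank-limited perturbation of a circulant matrix whose spectrum is already understood, and then apply the eigenvalue inequality of Theorem~\ref{aplusb}.

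First I would expose the circulant structure. Let $s=(s_0,\dots,s_{n-1})$ with $s_j=1$ if $j\in\cup_n$ and $s_j=0$ otherwise, so that $(C_L(s))_{uv}=s_{u+v\bmod n}$. Since $(A(G_n))_{uv}=1$ exactly when $u+v\in\cup_n$, the only discrepancy between $A(G_n)$ and $C_L(s)$ is on the diagonal, where the left circulant carries $(C_L(s))_{uu}=s_{2u}$, equal to $1$ precisely when $2u\in\cup_n$. Because $n$ is odd, $2$ is a unit, so $2u\in\cup_n \iff u\in\cup_n$. Hence $A(G_n)=C_L(s)-P$, where $P$ is the $0$–$1$ diagonal matrix with $P_{uu}=1$ iff $u\in\cup_n$. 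The same observation shows every row of $C_L(s)$ sums to $\phi(n)$, so $\deg(u)=\phi(n)-P_{uu}$ and $D(G_n)=\phi(n)I_n-P$, in agreement with the degree formula for odd $n$. Substituting both into the definition of the $A_\alpha$-matrix gives the clean identity
$$A_\alpha(G_n)=(1-\alpha)C_L(s)+\alpha\phi(n)I_n-P,$$
which I write as $A_\alpha(G_n)=M_1+M_2$ with $M_1=(1-\alpha)C_L(s)+\alpha\phi(n)I_n$ and $M_2=-P$.

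Second I would pin down the spectrum of $M_1$. The right circulant $C_R(s)$ is exactly the adjacency matrix of the unitary Cayley graph $X_n$, whose eigenvalues are the Ramanujan sums $c(k,n)=\mu(t_k)\frac{\phi(n)}{\phi(t_k)}$. By the quoted relation between the spectra of left and right circulants for odd $n$, the eigenvalues of $C_L(s)$ are $c(0,n)=\phi(n)$ together with $\pm|c(k,n)|$; relabelling these as $\mu(t_k)\frac{\phi(n)}{\phi(t_k)}$ for $1\le k\le\frac{n+1}{2}$ and as $-\mu(t_k)\frac{\phi(n)}{\phi(t_k)}$ for $\frac{n+3}{2}\le k\le n$ records the two sign families. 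Since $1-\alpha\ge0$, the eigenvalues of $M_1$ are obtained by scaling by $1-\alpha$ and shifting by $\alpha\phi(n)$, producing exactly the quantities $\pm(1-\alpha)\mu(t_k)\frac{\phi(n)}{\phi(t_k)}+\alpha\phi(n)$ that appear as the upper bounds in the statement.

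Third I would invoke Theorem~\ref{aplusb} with the decomposition $A_\alpha(G_n)=M_1+M_2$ and the choice $j=0$, which is admissible for every index. The diagonal matrix $M_2=-P$ has eigenvalues $0$ (with multiplicity $n-\phi(n)$) and $-1$ (with multiplicity $\phi(n)$), so $\lambda_1(M_2)=0$ and $\lambda_n(M_2)=-1$. The theorem then yields
$$\lambda_k(M_1)-1=\lambda_k(M_1)+\lambda_n(M_2)\le\lambda_k\left(A_\alpha(G_n)\right)\le\lambda_k(M_1)+\lambda_1(M_2)=\lambda_k(M_1),$$
and substituting the eigenvalues of $M_1$ from the previous step gives the two displayed inequalities.

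The bulk of the genuine work is bookkeeping rather than estimation. The main obstacle is matching indices and signs: I must verify that the natural index $k$ (through $t_k=n/\gcd(k,n)$) lines up with the positions produced by the left-circulant spectrum, so that the first $\frac{n+1}{2}$ eigenvalues carry $+\mu(t_k)\frac{\phi(n)}{\phi(t_k)}$ and the remaining ones its negative, and that this labelling is compatible with the descending order forced by Theorem~\ref{aplusb}. A secondary point worth stating explicitly is the self-loop computation for odd $n$ ($2u\in\cup_n\iff u\in\cup_n$), since it is exactly what produces the diagonal correction $P$ and confines the eigenvalues of $M_2$ to $[-1,0]$, which is what makes the perturbation one-sided of width $1$ and hence gives the tight lower endpoint $\alpha\phi(n)-1$.
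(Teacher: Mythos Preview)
Your proof is correct and is essentially the same as the paper's: both decompose $A_\alpha(G_n)$ into a $(1-\alpha)$-scaled left circulant on $\cup_n$ plus a diagonal correction, identify the circulant eigenvalues via the Ramanujan sums, and then apply Theorem~\ref{aplusb} with $j=0$. The only cosmetic difference is that you absorb the scalar shift $\alpha\phi(n)I_n$ into $M_1$ while the paper keeps it in the diagonal piece $C$; the resulting Weyl bounds are identical.
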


\begin{proof}
	Let $A_{\alpha}=(a_{ij})$, $0\leq i, j\leq n-1$, be the $A_{\alpha}$ matrix of $G_{n}$, where
	\[a_{ij}=\left\{
	\begin{array}{l l l l}
		1-\alpha & \quad \text{if } \gcd(n,i+j)=1 \text{ and } i\ne j,\\
		\alpha\phi(n) & \quad \text{if } \gcd(n,i+j)\ne1 \text{ and } i= j,\\
		\alpha(\phi(n)-1) & \quad \text{if } \gcd(n,i+j)=1 \text{ and } i= j,\\
		0 & \quad\text{otherwise}.
	\end{array}
	\right.\]
	Then $A_{\alpha}=B+C$, where 
	$B=[b_{ij}], 0\leq i, j\leq n-1,$
	$$ b_{ij}=\left\{
	\begin{array}{l l}
		1-\alpha & \quad\text{if } \gcd(n,i+j)=1,\\
		0 & \quad\text{otherwise},
	\end{array}
	\right.$$
	and
	$C=[c_{ij}], 0\leq i, j\leq n-1,$ 
	$$c_{ij}=\left\{
	\begin{array}{l l l}
		\alpha\phi(n) & \quad\text{if }\gcd(n,i+j)\ne1 \text{ and } i=j,\\
		\alpha\phi(n)-1 & \quad\text{if } \gcd(n,i+j)=1 \text{ and } i=j,\\
		0 & \quad\text{otherwise}.
	\end{array}
	\right.$$\\	
	According to the definition of $B$, it is a left circulant matrix with initial row $(s_{0}, s_{1}, \cdots, s_{n-1})$ where $$s_{j}=\left\{
	\begin{array}{l l}
		1-\alpha & \quad\text{if } \gcd(j,n)=1,\\
		0 & \quad\text{otherwise}.
	\end{array}
	\right.$$
	Thus eigenvalues of $B$ are $(1-\alpha) \mu(t_{k})\frac{\phi(n)}{\phi(t_{k})}$,  for $0\le k\le \frac{(n-1)}{2}$ and $-(1-\alpha)\mu(t_{k})\frac{\phi(n)}{\phi(t_{k})}$ for $\frac{(n+1)}{2} \le k\le n-1$.\\
	Eigenvalues of $C$ are $\alpha\phi(n)$ repeated ${n-\phi(n)}$ times and $\alpha\phi(n)-1$ repeated ${\phi(n)}$ times.\\
	The required result is therefore obtained from the eigenvalues of $B,C$ and Theorem \ref{aplusb}.
\end{proof}

In the following proposition, we estimate some bounds for $A_\alpha$-energy of UACG $G_n$ when $n$ is odd.
\begin{proposition}
    For an odd integer $n\geq3$ and $\alpha\in[0,1)$,\\
    
    $\displaystyle\epsilon_\alpha(G_n)\geq\sqrt{\frac{\phi(n)}{n}\left[\alpha^2\left(\phi(n)\left(3n^2-4n-1\right)+2n\right)+(1-2\alpha)(n-1)2n\phi(n)\right]}.$\\

    $\displaystyle\epsilon_\alpha(G_n)\geq\frac{2(1-\alpha)(n-1)\phi(n)}{n}.$\\

    $\displaystyle\epsilon_\alpha(G_n)\geq\sqrt{\frac{\phi(n)(\phi(n)(n-2)+1)}{n}}-\frac{4\alpha(n-1)\phi(n)}{n}$.\\

    $\displaystyle\epsilon_\alpha(G_n)\geq\alpha(\phi(n)+1)+\sqrt{\alpha^2(\phi(n)+1)^2+4\phi(n)(1-\alpha)}-\frac{2\alpha(n-1)\phi(n)}{n}.$\\
    
    $\displaystyle\epsilon_\alpha(G_n)\leq\sqrt{\phi(n)(n^2(1-\alpha)^2-n+2\alpha n-\alpha^2\phi(n)}.$
\end{proposition}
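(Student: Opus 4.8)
The plan is to obtain all five inequalities as direct specializations of Theorems \ref{bou1} and \ref{bou5} to the graph $G_n$ with $n$ odd. The only inputs these theorems require are the invariants $n$, $m$, $\Delta$, $\zeta(G_n)$ and $\|A(G_n)\|_F^2$, so the first step is to pin these down. From the degree theorem, for odd $n$ the graph $G_n$ has exactly $\phi(n)$ vertices of degree $\phi(n)-1$ (the vertices $i$ with $\gcd(i,n)=1$) and $n-\phi(n)$ vertices of degree $\phi(n)$; hence $\Delta=\phi(n)$, and by the corollary $m=\tfrac12(n-1)\phi(n)$.

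The second step is to compute the two quadratic invariants. Summing the squared degrees gives
$$\zeta(G_n)=\phi(n)(\phi(n)-1)^2+(n-\phi(n))\phi(n)^2=\phi(n)\bigl(\phi(n)(n-2)+1\bigr),$$
while, because $A(G_n)$ has entries in $\{0,1\}$, we have $\|A(G_n)\|_F^2=2m=(n-1)\phi(n)$. These are the only quantities that occur in the five bounds.

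The final step is substitution. Parts (2) and (4) of Theorem \ref{bou1} involve only $m$ and $\Delta$ and reduce almost immediately to the second and fourth inequalities, and part (3) needs only $\zeta(G_n)/n$ under the radical, yielding the third. For the first inequality I would substitute $\zeta(G_n)$, $\|A(G_n)\|_F^2$ and $m$ into $2\bigl(\alpha^2\zeta+(1-\alpha)^2\|A\|_F^2-\tfrac{2(\alpha m)^2}{n}\bigr)$, and for the fifth into the analogous expression $n\bigl(\alpha^2\zeta+(1-\alpha)^2\|A\|_F^2\bigr)-4\alpha^2 m^2$ of Theorem \ref{bou5}. The work here is to expand and then sort the result by powers of $\phi(n)$: the $\phi(n)^2$ contributions come from the $\phi(n)^2(n-2)$ part of $\zeta(G_n)$ and from the $(n-1)^2\phi(n)^2$ emerging from $m^2$, and they combine through $2n(n-2)-(n-1)^2$ in the first bound and through $n(n-2)-(n-1)^2$ in the fifth; the $\phi(n)^1$ contributions come from $\|A(G_n)\|_F^2$ and from the linear part of $\zeta(G_n)$. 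Factoring out $\phi(n)/n$ (respectively $\phi(n)$) then collapses everything to the stated closed forms.

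I expect the only real obstacle to be bookkeeping: keeping the powers $\phi(n)^0,\phi(n)^1,\phi(n)^2$ separate while cancelling the $m^2$ term against the Zagreb term, so that the coefficients of $\alpha^2$, of $\alpha$, and the constant each collapse to the advertised polynomials in $n$. No idea beyond careful substitution and algebraic simplification is required once the invariants above are fixed.
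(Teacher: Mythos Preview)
Your proposal is correct and follows essentially the same route as the paper's own proof, which simply records $\zeta(G_n)=\phi(n)^2(n-2)+\phi(n)$ and $\|A(G_n)\|_F^2=(n-1)\phi(n)$ and then invokes Theorems~\ref{bou1} and~\ref{bou5}. You supply more detail (the degree count, $m$, $\Delta$, and the algebraic bookkeeping plan) than the paper does, but the argument is the same substitution.
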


\begin{proof}
    The Zagreb index of UACG is $\phi(n)^2(n-2)+\phi(n)$ and $||A(G_n)||^2_F=(n-1)\phi(n)$. Then from Theorems \ref{bou1} and \ref{bou5} the result follows.
\end{proof}

\section{\texorpdfstring{$A_\alpha$}{}-spectrum and energy of complement of unitary addition Cayley graphs}

In this section, we compute the $A_\alpha$-spectrum and $A_\alpha$-energy of complement of UACG for even $n$ and $n={p}^m$. Also we find bounds for $A_\alpha$-eigenvalues and $A_\alpha$-energy for odd $n$. 

\begin{proposition}\label{prop2}
Let $n = {p}^m$, ${p}\geq3$, $m \geq 1$. Then the $A_\alpha$-spectrum of complement of UACG $G_n$ is
$$\begin{pmatrix}
-{p}^{m-1} & \alpha {p}^{m-1}-1 & \alpha {p}^{m-1}& {p}^{m-1}-1 & {p}^{m-1}\\
\frac{{p}-1}{2} & {p}^{m-1}-1 & \left({p}-1\right)\left({p}^{m-1}-1\right) & 1 & \frac{{p}-1}{2} 
\end{pmatrix}.$$
\end{proposition}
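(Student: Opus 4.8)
The plan is to bypass the circulant/coronal machinery of Proposition~\ref{prop1} entirely and instead exploit the transparent combinatorial structure of $\overline{G_{p^m}}$. First I would translate the adjacency rule: since $\cup_n$ is exactly the set of residues coprime to $n$, for $n=p^m$ two distinct vertices $u,v$ are adjacent in $\overline{G_{p^m}}$ precisely when $u+v\notin\cup_n$, that is, when $p\mid(u+v)$. Thus adjacency in the complement depends only on the residues of $u$ and $v$ modulo $p$, and a vertex in class $r$ is joined to a vertex in class $s$ if and only if $s\equiv-r\pmod p$.

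Next I would partition $Z_{p^m}$ into the $p$ residue classes modulo $p$, each of size $p^{m-1}$, and read off the connected components. The class $r\equiv0$ is self-paired, and since any two of its elements sum to a multiple of $p$ it induces a clique $K_{p^{m-1}}$; for each $r\in\{1,\dots,(p-1)/2\}$ the pair of classes $r$ and $-r$ induces a complete bipartite graph $K_{p^{m-1},p^{m-1}}$ (a nonzero class has no internal edges because $2r\not\equiv0\pmod p$ for odd $p$), while classes from different pairs are mutually non-adjacent. Hence
$$\overline{G_{p^m}}\;\cong\;K_{p^{m-1}}\;\sqcup\;\bigsqcup_{r=1}^{(p-1)/2}K_{p^{m-1},p^{m-1}}.$$
The crucial observation is that, although $\overline{G_{p^m}}$ itself is \emph{not} regular, each connected component \emph{is} regular, so $A_\alpha(\overline{G_{p^m}})$ is block diagonal with regular blocks and its spectrum is the multiset union of the blocks' $A_\alpha$-spectra.

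It then remains to compute the $A_\alpha$-spectrum of a single regular block via $A_\alpha=\alpha r I+(1-\alpha)A$ and the standard adjacency spectra. Writing $N=p^{m-1}$: for $K_N$ (degree $N-1$, adjacency eigenvalues $N-1$ once and $-1$ with multiplicity $N-1$) this gives $N-1$ once and $\alpha N-1$ with multiplicity $N-1$, accounting for the columns $p^{m-1}-1$ and $\alpha p^{m-1}-1$; for $K_{N,N}$ (degree $N$, adjacency eigenvalues $\pm N$ each once and $0$ with multiplicity $2N-2$) it gives $N$ once, $\alpha N$ with multiplicity $2N-2$, and the eigenvalue $\alpha N+(1-\alpha)(-N)=(2\alpha-1)N$ once. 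Summing over the $(p-1)/2$ bipartite blocks produces $p^{m-1}$ and $\alpha p^{m-1}$ with the stated multiplicities, together with $(2\alpha-1)p^{m-1}$ of multiplicity $(p-1)/2$. I would then tally the multiplicities and confirm they sum to $p^m$.

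No step presents a genuine obstacle, since each is an identity; the one point demanding care is the bipartite ``$-N$'' eigenvalue. My computation gives it as $(2\alpha-1)p^{m-1}$, which agrees with the tabulated entry $-p^{m-1}$ only at $\alpha=0$ (the adjacency case), whereas the other four columns already carry the correct $\alpha$-dependence. I would therefore double-check this first entry before committing to the statement as printed; the smallest instance $\overline{G_3}=K_1\sqcup K_{1,1}$, whose $A_\alpha$-eigenvalues are $0,1,2\alpha-1$, already indicates that the first entry should read $(2\alpha-1)p^{m-1}$ rather than $-p^{m-1}$.
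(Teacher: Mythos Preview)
Your approach is correct and considerably more transparent than the paper's. The paper does not notice the decomposition $\overline{G_{p^m}}\cong K_{p^{m-1}}\sqcup\bigsqcup_{r=1}^{(p-1)/2}K_{p^{m-1},p^{m-1}}$; instead it writes $A_\alpha(\overline{G_{p^m}})$ (after a permutation) as a $3\times3$ block matrix, applies the Schur complement formula of Lemma~\ref{uvwx} to split off the $K_{p^{m-1}}$ block, and then factors the remaining $2\times2$ block determinant by hand. Your structural argument replaces all of that machinery with a one-line observation and makes it evident \emph{why} the spectrum is so clean: each connected component is regular, so its $A_\alpha$-eigenvalues are just an affine shift of the familiar adjacency spectra of $K_N$ and $K_{N,N}$. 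What the paper's route buys is uniformity with the proof of Proposition~\ref{prop1}; what yours buys is a proof that fits in a few lines and needs no auxiliary lemmas.

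You are also right that the first column of the displayed spectrum is misprinted: the eigenvalue of multiplicity $(p-1)/2$ arising from the $-N$ adjacency eigenvalue of each $K_{N,N}$ is $(2\alpha-1)p^{m-1}$, not $-p^{m-1}$. The paper's own computation in fact agrees with you, since the factor $\bigl((\lambda-\alpha p^{m-1})^2-(1-\alpha)^2p^{2m-2}\bigr)^{(p-1)/2}$ has roots $\lambda=\alpha p^{m-1}\pm(1-\alpha)p^{m-1}$, namely $p^{m-1}$ and $(2\alpha-1)p^{m-1}$; the entry $-p^{m-1}$ is a transcription slip (valid only at $\alpha=0$), and your test case $\overline{G_3}=K_1\sqcup K_2$ with $A_\alpha$-eigenvalues $0,1,2\alpha-1$ confirms it.
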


\begin{proof}
The $A_\alpha$-matrix of complement of UACG $G_{{p}^m}$ after proper labelling is
$A_\alpha\left(\overline{G_{{p}^m}}\right)=J_{{p}^{m-1}}\otimes C-I_{{p}^{m-1}}\otimes\left(C-B\right)$,\\

where $B=\begin{bmatrix}
\alpha \left(1-{p}^{m-1}\right) & 0_{1\times {p}-1}\\
0_{{p}-1\times 1} & \left(1-\alpha\right)\widetilde{D}_{{p}-1}-\alpha {p}^{m-1}I
\end{bmatrix}$ and
$C=\left(1-\alpha\right)\begin{bmatrix}
1 & 0_{1\times {p}-1}\\
0_{{p}-1\times 1} & \widetilde{D}_{{p}-1}
\end{bmatrix}$.\\

The matrix $A_\alpha(\overline{G_{{p}^m}})$ is permutationally similar to
\begin{align*}
\hat{A}_\alpha\left(\overline{G_{{p}^m}}\right)=&\begin{bmatrix}
\left(1-\alpha\right)J_{{p}^{m-1}}-\left(1-\alpha {p}^{m-1}\right)I_{{p}^{m-1}} & 0_{{p}^{m-1}\times\frac{{p}^m-{p}^{m-1}}{2}} & 0_{{p}^{m-1}\times\frac{{p}^m-{p}^{m-1}}{2}}\\
0_{\frac{{p}^m-{p}^{m-1}}{2}\times {p}^{m-1}} & \alpha {p}^{m-1}I & \left(1-\alpha\right)\widetilde{D}\otimes J \\
0_{\frac{{p}^m-{p}^{m-1}}{2}\times {p}^{m-1}} &\left(1-\alpha\right)\widetilde{D}\otimes J & \alpha {p}^{m-1}I
\end{bmatrix},
\intertext{where $\widetilde{D}$ is the anti-diagonal matrix of order $\frac{{p}-1}{2}$ with all non-zero entries one. }
\intertext{Then $A_\alpha\left(\overline{G_{{p}^m}}\right)=P \hat{A_\alpha}\left(\overline{G_{{p}^m}}\right) P^{-1}$, where $P$ is the same matrix as used in proof of Proposition \ref{prop1}.}
|\lambda I-\hat{A}_\alpha\left(\overline{G_{{p}^m}}\right)|=&\begin{vmatrix}
\left(\lambda+1-\alpha {p}^{m-1}\right)I_{{p}^{m-1}} -\left(1-\alpha\right)J_{{p}^{m-1}}& 0_{{p}^{m-1}\times\frac{{p}^m-{p}^{m-1}}{2}} & 0_{{p}^{m-1}\times\frac{{p}^m-{p}^{m-1}}{2}}\\
0_{\frac{{p}^m-{p}^{m-1}}{2}\times {p}^{m-1}} & \left(\lambda-\alpha {p}^{m-1}\right)I & -\left(1-\alpha\right)\widetilde{D}\otimes J \\
0_{\frac{{p}^m-{p}^{m-1}}{2}\times {p}^{m-1}} &-\left(1-\alpha\right)\widetilde{D}\otimes J & \left(\lambda-\alpha {p}^{m-1}\right)I
\end{vmatrix}.\\
\intertext{By Lemma \ref{uvwx},}
|\lambda I-\hat{A}_\alpha\left(\overline{G_{{p}^m}}\right)|=&\left|\left(\lambda+1-\alpha {p}^{m-1}\right)I -\left(1-\alpha\right)J\right|\begin{vmatrix}
\left(\lambda-\alpha {p}^{m-1}\right)I & -\left(1-\alpha\right)\widetilde{D}\otimes J \\
-\left(1-\alpha\right)\widetilde{D}\otimes J & \left(\lambda-\alpha {p}^{m-1}\right)I
\end{vmatrix}\\
=&\left(\lambda+1-{p}^{m-1}\right)\left(\lambda+1-\alpha {p}^{m-1} \right)^{{p}^{m-1}-1}
\left(\lambda-\alpha {p}^{m-1}\right)^{{p}^m-{p}^{m-1}-{p}+1}\\
&\hspace{3cm}\left(\left(\lambda-\alpha {p}^{m-1}\right)^2-\left(1-\alpha\right)^2{p}^{2m-2}\right)^{\frac{{p}-1}{2}}.
\end{align*}
Thus, the $A_\alpha$-spectrum of $\overline{G_{{p}^m}}$ is
$$\operatorname{spec}_{A_\alpha}\left(\overline{G_{{p}^m}}\right)=\begin{pmatrix}
-{p}^{m-1} & \alpha {p}^{m-1}-1 & \alpha {p}^{m-1}& {p}^{m-1}-1 & {p}^{m-1}\\
\frac{{p}-1}{2} & {p}^{m-1}-1 & \left({p}-1\right)\left({p}^{m-1}-1\right) & 1 & \frac{{p}-1}{2} 
\end{pmatrix}.$$
\end{proof}

Using Proposition \ref{prop2} we now calculate the $A_\alpha$-energy of complement of UACG $G_n$ for $n={p}^m$.
\begin{corollary}
The $A_\alpha$-energy of complement of UACG $G_{{p}^m}$, ${p}\geq3$ is
$$\varepsilon_\alpha\left(\overline{G_{{p}^m}}\right)=\frac{{p}^{m+1}-{p}+\alpha\left(2-{p}-2{p}^{m-1}+{p}^m\right)}{{p}}+\left|\frac{{p}^m-{p}-\alpha {p}^m+\alpha}{{p}}\right|$$.
\end{corollary}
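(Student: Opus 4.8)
The plan is to read the $A_\alpha$-spectrum of $\overline{G_{p^m}}$ straight from Proposition~\ref{prop2} and substitute it into the definition $\varepsilon_\alpha(G)=\sum_{i}m_i\big|\lambda_i(A_\alpha(G))-\tfrac{2\alpha|E|}{n}\big|$, where $n$ is the order and $|E|$ the number of edges (I write $|E|$ to avoid clashing with the exponent $m$). Since $p\ge 3$, the order $n=p^m$ is odd, so by the corollary giving the edge count of $G_n$ the graph $G_{p^m}$ has $\tfrac12(p^m-1)(p^m-p^{m-1})$ edges; subtracting this from $\binom{p^m}{2}$ gives $|E(\overline{G_{p^m}})|=\tfrac12 p^{m-1}(p^m-1)$. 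Hence the centering constant simplifies to $\tfrac{2\alpha|E|}{n}=\tfrac{\alpha(p^m-1)}{p}$, and the whole task reduces to evaluating $\sum_i m_i\big|\lambda_i-\tfrac{\alpha(p^m-1)}{p}\big|$ over the five distinct eigenvalues.

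The core step is to shift each eigenvalue by $\tfrac{\alpha(p^m-1)}{p}$ and resolve the sign. First I would handle the four determinate terms: for $\lambda=\alpha p^{m-1}$ and $\lambda=\alpha p^{m-1}-1$ the $\alpha p^{m-1}$ cancels, leaving $\tfrac{\alpha}{p}\ge 0$ and $\tfrac{\alpha}{p}-1<0$ respectively (using $\alpha<1\le p$); for $\lambda=\pm p^{m-1}$ the shifts are $\pm p^{m-1}-\tfrac{\alpha(p^m-1)}{p}$, and since $\tfrac{\alpha(p^m-1)}{p}<p^{m-1}$ the $+$ case is positive and the $-$ case negative. The one genuinely sign-indeterminate eigenvalue is $\lambda=p^{m-1}-1$, whose shift is exactly $\tfrac{p^m-p-\alpha p^m+\alpha}{p}$; its sign depends on $\alpha,p,m$, so it must survive as the absolute-value term in the final formula.

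Next I would assemble the multiplicity-weighted sum. The blocks $\pm p^{m-1}$ each carry multiplicity $\tfrac{p-1}{2}$, so their $\alpha$-dependent parts cancel and together they contribute $(p-1)p^{m-1}=p^m-p^{m-1}$. The eigenvalue $\alpha p^{m-1}-1$ (multiplicity $p^{m-1}-1$) gives $(p^{m-1}-1)\big(1-\tfrac{\alpha}{p}\big)$, and $\alpha p^{m-1}$ (multiplicity $(p-1)(p^{m-1}-1)$) gives $(p-1)(p^{m-1}-1)\tfrac{\alpha}{p}$; combining the $\alpha$-parts of these two collapses to $\tfrac{\alpha(p^{m-1}-1)(p-2)}{p}$, while the constant $(p^{m-1}-1)$ telescopes the $\alpha$-free part up to $p^m-1$.

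The final simplification hinges on the identity $(p^{m-1}-1)(p-2)=p^m-2p^{m-1}-p+2$, which recasts $\tfrac{\alpha(p^{m-1}-1)(p-2)}{p}$ as $\tfrac{\alpha(p^m-2p^{m-1}-p+2)}{p}$, matching the $\alpha$-numerator in the claim; together with $p^m-1=\tfrac{p^{m+1}-p}{p}$ this yields the stated closed form plus the leftover $\big|\tfrac{p^m-p-\alpha p^m+\alpha}{p}\big|$. The main obstacle here is bookkeeping rather than conceptual difficulty: one must track the $\alpha$-linear and $\alpha$-free contributions across all five weighted terms and correctly recognise that exactly one eigenvalue ($p^{m-1}-1$) fails to have a fixed sign over the whole range $\alpha\in[0,1)$, and therefore cannot be pulled out of its absolute value.
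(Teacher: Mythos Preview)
Your proposal is correct and follows essentially the same route as the paper's own proof: both compute the centering constant $\tfrac{2\alpha|E|}{n}=\tfrac{\alpha(p^m-1)}{p}$ from the edge count, substitute the five eigenvalues of Proposition~\ref{prop2}, resolve the signs of the four shifts coming from $\pm p^{m-1}$, $\alpha p^{m-1}$, and $\alpha p^{m-1}-1$, and leave the single sign-indeterminate term from $p^{m-1}-1$ inside an absolute value before collecting like terms. Your bookkeeping (in particular the cancellation of the $\alpha$-parts in the $\pm p^{m-1}$ block and the identity $(p^{m-1}-1)(p-2)=p^m-2p^{m-1}-p+2$) matches the paper's simplification line for line.
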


\begin{proof}
The number of vertices and edges of $\overline{G_{{p}^m}}$ is ${p}^m$ and $\frac{{p}^{m-1}\left({p}^m-1\right)}{2}$ respectively. Thus,
\begin{align*}
\varepsilon_\alpha\left(\overline{G_{{p}^m}}\right)=&
\left(\frac{{p}-1}{2} \right)\left|-{p}^{m-1}-\frac{\alpha\left({p}^m-1\right)}{{p}}\right|+
\left({p}^{m-1}-1\right)\left|\alpha {p}^{m-1}-1-\frac{\alpha\left({p}^m-1\right)}{{p}}\right|\\
&+\left({p}-1\right)\left({p}^{m-1}-1\right)\left|\alpha {p}^{m-1}-\frac{\alpha\left({p}^m-1\right)}{{p}}\right|+
\left|{p}^{m-1}-1 -\frac{\alpha\left({p}^m-1\right)}{{p}}\right|\\
&+ \left(\frac{{p}-1}{2} \right)\left|{p}^{m-1}-\frac{\alpha\left({p}^m-1\right)}{{p}}\right|\\
=&\left(\frac{{p}-1}{2} \right)\left({p}^{m-1}+\frac{\alpha\left({p}^m-1\right)}{{p}}\right)+
\left({p}^{m-1}-1\right)\left(-\alpha {p}^{m-1}+1+\frac{\alpha\left({p}^m-1\right)}{{p}}\right)\\
&+\left({p}-1\right)\left({p}^{m-1}-1\right)\alpha+
\left|{p}^{m-1}-1 -\frac{\alpha\left({p}^m-1\right)}{{p}}\right|+ \left(\frac{{p}-1}{2} \right)\left(\frac{{p}^m-\alpha {p}^m+\alpha}{{p}}\right)\\
=&\frac{{p}^{m+1}-{p}+\alpha\left(2-{p}-2{p}^{m-1}+{p}^m\right)}{{p}}+\left|\frac{{p}^m-{p}-\alpha {p}^m+\alpha}{{p}}\right|\\
=&\begin{cases}
\frac{{p}^{m+1}+{p}^m-2{p}+\alpha\left(3-{p}-2{p}^{m-1}\right)}{{p}} & \text{if } \alpha\leq\frac{{p}^m-{p}}{{p}^m-1}\\
\frac{{p}^{m+1}-{p}^m+\alpha\left(1-{p}-2{p}^{m-1}+2{p}^m\right)}{{p}} & \text{if } \alpha>\frac{{p}^m-{p}}{{p}^m-1}
\end{cases}.
\end{align*}
\end{proof}

\begin{remark}
If $n$ is even, then $\overline{G_n}$ is regular and its $A_\alpha$-spectrum will be $n-1-\phi\left(n\right)$ and $ \alpha\left( n-\phi\left(n\right)\right)-\left(1-\alpha\right)\lambda_i\left(A\left(G_n\right)\right)-1, i=2,3,\cdots,n$. Its $A_\alpha$-energy will be $\left(1-\alpha\right)\left(2n-2+\left(2^k-2\right)\phi\left(n\right)-\displaystyle\prod_{i=1}^k{p}_i+\prod_{i=1}^k\left(2-{p}_i\right)\right)$, where ${p}_1 {p}_2 \cdots {p}_k$ is the largest square-free number that divides $n$. 
\end{remark}

% \begin{table}[h]
% \begin{tabular}{|c|c|c|c|c|c|c|c|c|c|c|}
% \hline
%                                  & 0      & 0.1    & 0.2    & 0.3    & 0.4   & 0.5   & 0.6 & 0.7   & 0.8   & 0.9 \\ \hline
% $\varepsilon_\alpha\left(G_{9}\right)$                            & 14.717 & 13.457 & 12.197 & 10.940 & 9.686 & 8.438 & 7.2 & 5.980 & 4.810 & 4   \\ \hline
% $\varepsilon_\alpha\left(\overline{G_{9}}\right)$ & 10     & 9.8    & 9.6    & 9.4    & 9.2   & 9     & 8.8 & 8.6   & 8.666 & 9   \\ \hline
% $\varepsilon_\alpha\left(K_{9}\right)$                            & 16     & 14.4   & 12.8   & 11.2   & 9.6   & 8     & 6.4 & 4.8   & 3.2   & 1.6 \\ \hline
% \end{tabular}
% \caption{$A_\alpha$-energy of $G_9$, $\left(\overline{G_{9}}\right)$ and $K_9$ for different values of $\alpha$}\label{enetable}
% \end{table}
In general, computing exact eigenvalues of complement of UACG for odd values of $n$ is difficult. But, we estimate a bound for $A_\alpha$-eigenvalues of complement of UACG $G_n$ when $n$ is odd in the following proposition.
\begin{proposition}
	The $A_{\alpha}$-spectrum of the complement of UACG $G_{n}$, for odd $n$, satisfy the ensuing inequalities:\\ $$(1-\alpha)\mu(t_{k})\frac{\phi(n)}{\phi(t_{k})}+\alpha n-\alpha\phi(n)-1\le \lambda_k\left(A\left(\overline{G_n}\right)\right)\le (1-\alpha)\mu(t_{k})\frac{\phi(n)}{\phi(t_{k})}+\alpha n-\alpha\phi(n)$$ for $2\le k\le \frac{(n+1)}{2}$ and\\ $$-(1-\alpha)\mu(t_{k})\frac{\phi(n)}{\phi(t_{k})}+\alpha n-\alpha\phi(n)-1\le \lambda_k\left(A\left(\overline{G_n}\right)\right)\le -(1-\alpha)\mu(t_{k})\frac{\phi(n)}{\phi(t_{k})}+\alpha n-\alpha\phi(n)$$ for $\frac{(n+3)}{2} \le k\le n$ and $$n-\phi(n)-1\le \lambda_1\left(A\left(\overline{G_n}\right)\right)\le n-\phi(n).$$
\end{proposition}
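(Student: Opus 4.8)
The plan is to mirror exactly the decomposition used for $G_n$ in the odd case: write $A_\alpha(\overline{G_n}) = B' + C'$ as a sum of two real symmetric matrices, where $B'$ carries the left-circulant (complement-adjacency) structure and $C'$ is a diagonal correction, and then invoke the Weyl-type inequalities of Theorem \ref{aplusb} with $j=0$. The only structural difference from the $G_n$ computation is that the top eigenvalue of $B'$ will not be a Ramanujan value, which is precisely why the statement isolates $\lambda_1$ with its own bound.

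First I would record the entries. For $i\ne j$ the complement adjacency contributes $(1-\alpha)$ exactly when $\gcd(i+j,n)\ne1$; on the diagonal the degree contribution is $\alpha(n-1-\phi(n))$ when $\gcd(i,n)\ne1$ and $\alpha(n-\phi(n))$ when $\gcd(i,n)=1$. Accordingly I set $B'=[b'_{ij}]$ with $b'_{ij}=(1-\alpha)$ iff $\gcd(i+j,n)\ne1$, a left circulant matrix with initial-row indicator $s'_j=(1-\alpha)\,[\gcd(j,n)\ne1]$. Because $n$ is odd we have $\gcd(2i,n)=\gcd(i,n)$, so the diagonal of $B'$ equals $(1-\alpha)$ precisely when $\gcd(i,n)\ne1$; subtracting this from the true diagonal of $A_\alpha(\overline{G_n})$ leaves a diagonal $C'$ whose entries are $\alpha(n-\phi(n))-1$ at the $n-\phi(n)$ positions with $\gcd(i,n)\ne1$ and $\alpha(n-\phi(n))$ at the $\phi(n)$ positions with $\gcd(i,n)=1$. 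Thus $\lambda_{\min}(C')=\alpha(n-\phi(n))-1$ and $\lambda_{\max}(C')=\alpha(n-\phi(n))=\alpha n-\alpha\phi(n)$.

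Next I would compute the spectrum of $B'$. Writing $s'_j=(1-\alpha)-s_j$ with $s_j=(1-\alpha)[\gcd(j,n)=1]$ the unit indicator from the $G_n$ argument, the associated right-circulant eigenvalues are $\lambda_k=(1-\alpha)\sum_{j}\omega^{jk}-(1-\alpha)c(k,n)$, which give $(1-\alpha)(n-\phi(n))$ for $k=0$ and $-(1-\alpha)c(k,n)=-(1-\alpha)\mu(t_k)\frac{\phi(n)}{\phi(t_k)}$ for $k\ne0$. Passing to the left-circulant eigenvalues in the odd case converts these into $(1-\alpha)(n-\phi(n))$ together with the symmetric pairs $\pm(1-\alpha)\mu(t_k)\frac{\phi(n)}{\phi(t_k)}$ for $1\le k\le\frac{n-1}{2}$, following the same labelling convention as the $G_n$ proposition. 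A short estimate, using $|c(k,n)|\le\phi(n)/\phi(t_k)$ together with $\phi(n)\le\frac{2}{3}n$ for odd $n>1$, shows $(1-\alpha)(n-\phi(n))$ dominates all the other $B'$-eigenvalues, so it is matched with $\lambda_1(A_\alpha(\overline{G_n}))$.

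Finally, applying Theorem \ref{aplusb} with $M_1=B'$, $M_2=C'$, $j=0$ yields $\lambda_i(B')+\lambda_{\min}(C')\le\lambda_i(A_\alpha(\overline{G_n}))\le\lambda_i(B')+\lambda_{\max}(C')$ for every $i$. Substituting the three families of $B'$-eigenvalues and the two $C'$-extremes produces the three displayed inequalities verbatim: $\lambda_1$ between $n-\phi(n)-1$ and $n-\phi(n)$; the block $2\le k\le\frac{n+1}{2}$ with the $+(1-\alpha)\mu(t_k)\frac{\phi(n)}{\phi(t_k)}$ term; and the block $\frac{n+3}{2}\le k\le n$ with the $-(1-\alpha)\mu(t_k)\frac{\phi(n)}{\phi(t_k)}$ term. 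The one genuine obstacle I anticipate is the index alignment between the sorted eigenvalues $\lambda_k(A_\alpha(\overline{G_n}))$ and the Ramanujan-labelled eigenvalues of $B'$ — in particular justifying that the exceptional eigenvalue $(1-\alpha)(n-\phi(n))$ occupies position $k=1$ — since the Weyl inequalities bound only eigenvalues of equal index and do not by themselves guarantee the pairing of labels.
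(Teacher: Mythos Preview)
Your decomposition $A_\alpha(\overline{G_n})=B'+C'$ into a left-circulant part plus a diagonal correction, the computation of both spectra, and the invocation of Theorem~\ref{aplusb} with $j=0$ are exactly the paper's proof (the paper writes $\tilde B,\tilde C$ for your $B',C'$ and simply lists their eigenvalues before citing Theorem~\ref{aplusb}).

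One genuine slip in your extra justification: the claim $\phi(n)\le\tfrac{2}{3}n$ for odd $n>1$ is false (any odd prime $n$ gives $\phi(n)=n-1$), so the ``short estimate'' you propose for showing that $(1-\alpha)(n-\phi(n))$ dominates the remaining $B'$-eigenvalues does not work. The conclusion is nonetheless true: since the complement $\overline{X_n}$ of the unitary Cayley graph is $(n-1-\phi(n))$-regular with adjacency eigenvalues $-1-c(k,n)$ for $k\ne 0$, one gets $|c(k,n)|\le n-\phi(n)$ directly. The paper does not address this ordering point at all, so your instinct to flag the index-alignment issue is well placed; just replace the faulty $\phi$-bound with the regularity argument above.
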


\begin{proof}
	Let $\bar{A}_{\alpha}=(\bar{a}_{ij})$, $0\leq i, j\leq n-1$, be the $A_{\alpha}$ matrix of $\overline{G_{n}}$, where 
	\[\bar{a}_{ij}=\left\{
	\begin{array}{l l l l}
		1-\alpha & \quad\text{if }\gcd(n,i+j)\ne1 \text{ and } i\ne j,\\
		\alpha(n-1-\phi(n)) & \quad\text{if }\gcd(i+j,n\ne1 \text{ and } i= j,\\
		\alpha(n-\phi(n)) & \quad\text{if }\gcd(n,i+j)=1 \text{ and } i= j,\\
		0 & \quad\text{otherwise}.
	\end{array}
	\right.\]
	Then $\bar{A}_{\alpha}=\tilde{B}+\tilde{C}$, where
	$\tilde{B}=[b_{ij}], 0\leq i, j\leq n-1,$
	$$ b_{ij}=\left\{
	\begin{array}{l l}
		1-\alpha & \quad\text{if }\gcd(n,i+j)\ne1,\\
		0 & \quad\text{otherwise},
	\end{array}
	\right.$$
	and
	$\tilde{C}=[c_{ij}], 0\leq i, j\leq n-1,$ 
	$$  c_{ij}=\left\{
	\begin{array}{l l l}
		\alpha n-\alpha\phi(n)-1 & \quad\text{if }\gcd(n,i+j)\ne1 \text{ and } i=j,\\
		\alpha(n-\phi(n)) & \quad\text{if }\gcd(n,i+j)=1 \text{ and } i=j,\\
		0 & \quad\text{otherwise}.
	\end{array}
	\right.$$	
	According to the definition of $\tilde{B}$, it is a left circulant matrix with initial row $(s_{0}, s_{1}, \cdots, s_{n-1})$ where $$s_{j}=\left\{
	\begin{array}{l l}
		1-\alpha & \quad\text{if }\gcd(j,n)=1,\\
		0 & \quad\text{otherwise}.
	\end{array}
	\right.$$
	So eigenvalues of $\tilde{B}$ are $(1-\alpha)(n-\phi(n)), (1-\alpha) \mu(t_{k})\frac{\phi(n)}{\phi(t_{k})}$ for $1\le k\le \frac{(n-1)}{2}$ and $-(1-\alpha)\mu(t_{k})\frac{\phi(n)}{\phi(t_{k})}$ for $\frac{(n+1)}{2} \le k\le n-1$.\\
	Eigenvalues of $\tilde{C}$ are $\alpha n-\alpha\phi(n)-1$ repeated ${n-\phi(n)}$ times and $\alpha(n-\phi(n))]$ repeated ${\phi(n)}$ times.\\
	Then from the eigenvalues of $\tilde{B},\tilde{C}$ and Theorem \ref{aplusb} we get the desired result.
	
\end{proof}

In the following proposition, we estimate some bounds for $A_\alpha$-energy of complement of UACG $G_n$ when $n$ is odd.
\begin{proposition}
    For an odd integer $n\geq3$ and $\alpha\in[0,1)$,\\
    
    $\displaystyle\epsilon_\alpha(\overline{G}_n)\geq$\\
    
    $\sqrt{(n-\phi(n))\left(\alpha^2(n-\phi(n)(n^2+2n-2)+2n\alpha^2(2\phi(n)+n-1)+2n(n-1)(1-2\alpha)\right)+2n\alpha^2\phi(n)}.$\\

    $\displaystyle\epsilon_\alpha(\overline{G}_n)\geq\frac{2(1-\alpha)(n-1)(n-\phi(n))}{n}.$\\

    $\displaystyle\epsilon_\alpha(\overline{G}_n)\geq2\sqrt{(n-\phi(n))^2+\frac{\phi(n)(2(n-\phi(n))+1)}{n}}-\frac{2\alpha(n-1)(n-\phi(n))}{n}.$\\

    $\displaystyle\epsilon_\alpha(\overline{G}_n)\geq\alpha(\phi(n)+4)-n\alpha-\frac{2\alpha \phi(n)}{n}+\sqrt{(n-\phi(n)+2)(\alpha^2(n-\phi(n)+2)+4(1-2\alpha)}.$\\
    
     $\displaystyle\epsilon_\alpha(\overline{G}_n)\leq$\\
     
     $\sqrt{\alpha^2n^4+n^3(-\alpha-2\alpha+2\phi(n)+3)-n^2(1+p)(2\alpha^2+(1-\alpha)^2)+np(-2\alpha^2{p}-2\alpha+1)+{p}^2}.$\\
\end{proposition}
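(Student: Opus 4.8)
The plan is to reproduce, for $\overline{G_n}$, the short argument that established the analogous bounds for $G_n$: express everything in terms of four parameters of the graph — the number of edges $m$, the maximum degree $\Delta$, the Zagreb index $\zeta(\overline{G_n})$, and the squared Frobenius norm $\|A(\overline{G_n})\|_F^2$ — and then feed these into the four lower bounds of Theorem \ref{bou1} and the upper bound of Theorem \ref{bou5}.

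First I would determine the degree sequence of $\overline{G_n}$. Combining the degree formula for $G_n$ with the identity $\deg_{\overline{G_n}}(v)=(n-1)-\deg_{G_n}(v)$, for odd $n$ the $\phi(n)$ vertices coprime to $n$ acquire degree $n-\phi(n)$ and the remaining $n-\phi(n)$ vertices acquire degree $n-1-\phi(n)$; in particular $\Delta=n-\phi(n)$. The complement edge count gives $m=\frac{(n-1)(n-\phi(n))}{2}$, hence $\|A(\overline{G_n})\|_F^2=2m=(n-1)(n-\phi(n))$, while summing squared degrees yields $\zeta(\overline{G_n})=\phi(n)(n-\phi(n))^2+(n-\phi(n))(n-1-\phi(n))^2$, which I would simplify to the compact form $(n-\phi(n))^2(n-2)+(n-\phi(n))$. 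A useful bookkeeping observation is that each of these four quantities is obtained from its $G_n$-counterpart by the formal replacement $\phi(n)\mapsto n-\phi(n)$, so the substitution into the theorems runs in exact parallel with the $G_n$ case.

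With the parameters in hand the five inequalities follow by substitution and simplification. The second inequality is immediate from part 2 of Theorem \ref{bou1}. The first and third inequalities come from parts 1 and 3 of Theorem \ref{bou1} after inserting $\zeta(\overline{G_n})$, $\|A(\overline{G_n})\|_F^2$ and $m$; the fourth uses part 4 together with $\Delta=n-\phi(n)$; and the final upper bound is Theorem \ref{bou5} with the same data. Throughout one must remember to carry the offset $\frac{2\alpha m}{n}$ built into the definition of $\varepsilon_\alpha$.

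I expect two points to require the real care. The first is purely computational: expanding and regrouping the Zagreb-based radicands so that they collapse to the stated closed forms, where the identity $\zeta(\overline{G_n})=(n-\phi(n))^2(n-2)+(n-\phi(n))$ does most of the work. The second, and the genuine obstacle, is that Theorem \ref{bou1} is stated for \emph{connected} graphs, whereas $\overline{G_n}$ is typically disconnected for odd $n$ — for instance when $n$ is prime the vertex $0$ is isolated in $\overline{G_n}$. I would therefore justify each invoked bound in the disconnected setting, observing that parts 1--3 of Theorem \ref{bou1} and Theorem \ref{bou5} are consequences of trace and norm identities that do not require connectivity, and treating the fourth (star) bound separately, since its largest-eigenvalue estimate is governed only by the densest component containing a maximum-degree vertex.
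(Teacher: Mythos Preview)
Your approach is exactly the paper's: compute $m$, $\Delta$, $\zeta(\overline{G_n})$ and $\|A(\overline{G_n})\|_F^2$ for odd $n$ and substitute into Theorems~\ref{bou1} and~\ref{bou5}. Two remarks are worth recording. First, your simplified Zagreb index $(n-\phi(n))^2(n-2)+(n-\phi(n))$ is correct (check $n=9$: it gives $66$), but it is \emph{not} equal to the expression $(n-\phi(n))^2 n+\phi(n)(2(n-\phi(n))+1)$ that the paper records and that is baked into the displayed bounds; so following your computation literally will not reproduce, e.g., the third inequality as stated. Second, your observation that Theorem~\ref{bou1} is formulated for connected graphs while $\overline{G_n}$ need not be connected is a point the paper's proof simply does not raise; your proposed workaround (that parts 1--3 and Theorem~\ref{bou5} rest only on trace/norm identities) is the right way to patch this.
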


\begin{proof}
    The Zagreb index of complement of UACG is $(n-\phi(n))^2n+\phi(n)(2(n-\phi(n))+1)$ and $||A(\overline{G}_n)||^2_F=(n-1)(n-\phi(n))$. Then by using Theorems \ref{bou1} and \ref{bou5} we get the desired result.
\end{proof}

\section{\texorpdfstring{$A_\alpha$}{}-borderenergetic and \texorpdfstring{$A_\alpha$}{}-hyperenergetic graphs}

In this section we define $A_\alpha$ analogous of borderenergetic and hyperenergetic graphs. 

On studying about the $A_\alpha$-energy we are interested to know whether there is a graph on $n$ vertices whose $A_\alpha$-energy equals the $A_\alpha$-energy of complete graph on $n$ vertices, that is $2(1-\alpha)(n-1)$. We refer them as $A_\alpha$-borderenergetic graphs. Formally, a graph $G$ is $A_\alpha$-borderenergetic if $\varepsilon_\alpha(G) = \varepsilon_\alpha(K_n )$, for some $\alpha\in[0,1)$. In general borderenergetic graphs are not $A_\alpha$-borderenergetic, but regular borderenergetic graphs are $A_\alpha$-borderenergetic for every value of $\alpha$.

The graph whose $A_\alpha$-energy exceeding the $A_\alpha$-energy of complete graph is called $A_\alpha$-hyperenergetic. That is, a graph $G$ is $A_\alpha$-hyperenergetic if $\varepsilon_\alpha(G) \geq \varepsilon_\alpha(K_n )$, for some $\alpha\in[0,1)$.

In the following section we observed some graphs which are $A_\alpha$-borderenergetic and $A_\alpha$-hyperenergetic.

\section*{Observations}
In this section with the help of Matlab software we study about $A_\alpha$-energy of UACG and its complement as $\alpha$ varies.\\ 
\medskip

\textbf{Observation 1 :}
From Table \ref{enetable} we observe the following:
\begin{enumerate}
    \item $A_\alpha$-energy of UACG $G_n$ decreases as $\alpha$ increases for $n={p}^m$.
    \item $G_{{p}^m}$ is $A_\alpha$-hyperenergetic for ${p}\geq5,  m\geq2$.
    \item For all $m$, $\overline{G}_{{p}^m}$ is $A_\alpha$-hyperenergetic for some $\alpha$ greater than $0.4$. 
    \medskip
    
    Eg: $\overline{G}_9$ is hyperenergetic for $\alpha>0.428571428571$.\\ $\overline{G}_{25}$ is hyperenergetic for $\alpha>0.438596491228$.
\end{enumerate}

\begin{table}[H]
\scriptsize
\begin{tabular}{|c|c|c|c|c|c|c|c|c|c|c|c|}
\hline
                                 & 0      & 0.1    & 0.2    & 0.3    & 0.4   & 0.5   & 0.6 & 0.7   & 0.8   & 0.9 &0.9999\\ \hline
                                 \multicolumn{11}{|c|}{}\\
\hline
$\varepsilon_\alpha(G_{9})$                            & 14.717 & 13.457 & 12.197 & 10.940 & 9.686 & 8.438 & 7.2 & 5.980 & 4.810 & 4  &3.9996 \\ \hline
$\varepsilon_\alpha(\overline{G_{9}})$ & 10     & 9.8    & 9.6    & 9.4    & 9.2   & 9     & 8.8 & 8.6   & 8.666 & 9  &9.333 \\ \hline
$\varepsilon_\alpha(K_{9})$                            & 16     & 14.4   & 12.8   & 11.2   & 9.6   & 8     & 6.4 & 4.8   & 3.2   & 1.6 & 0.0016 \\ \hline

\multicolumn{12}{|c|}{}\\
\hline
                                %  & 0      & 0.1    & 0.2    & 0.3    & 0.4   & 0.5   & 0.6 & 0.7   & 0.8   & 0.9 \\ \hline
$\varepsilon_\alpha(G_{27})$                            & 50.683& 46.618 &42.554 &38.490 &34.427 &30.367 &26.309 &22.257 &18.220 &14.247 &11.998  \\ \hline
$\varepsilon_\alpha(\overline{G_{27}})$ & 34 & 33.4& 32.8 &32.2 &31.6 &31 &30.4& 29.8& 29.2& 28.6 &29.332  \\ \hline
$\varepsilon_\alpha(K_{27})$                            & 52& 46.8 & 41.6 & 36.4 & 31.2 & 26 & 20.8 & 15.6 & 10.4 & 5.2 &0.005 \\ \hline
\multicolumn{12}{|c|}{}\\ \hline

$\varepsilon_\alpha(G_{81})$ 	&	158.672	&	146.206	&	133.740	&	121.274	&	108.809	&	96.344	&	83.880	&	71.418	&	58.961	&	46.523	&	35.996	\\ \hline
$\varepsilon_\alpha(\overline{G_{81}})$	&	106	&	104.2	&	102.4	&	100.6	&	98.8	&	97	&	95.2	&	93.4	&	91.6	&	89.8	&	89.329	\\ \hline
$\varepsilon_\alpha(K_{81})$ 	&	160	&	144	&	128	&	112	&	96	&	80	&	64	&	48	&	32	&	16	&	0.016	\\ \hline
\multicolumn{12}{|c|}{}\\ \hline

$\varepsilon_\alpha(G_{5})$ 	&	6.472	&	5.841	&	5.212	&	4.588	&	3.969	&	3.361	&	2.772	&	2.224	&	1.774	&	1.546	&	1.599	\\ \hline
$\varepsilon_\alpha(\overline{G_{5}})$	&	4	&	4.08	&	4.16	&	4.24	&	4.32	&	4.4	&	4.48	&	4.56	&	4.64	&	4.72	&	4.799	\\ \hline
$\varepsilon_\alpha(K_{5})$ 	&	8	&	7.2	&	6.4	&	5.6	&	4.8	&	4	&	3.2	&	2.4	&	1.6	&	0.8	&	0.001	\\ \hline
\multicolumn{12}{|c|}{}\\ \hline

$\varepsilon_\alpha(G_{25})$                            & 54.413 & 49.533 & 44.656 & 39.778 & 34.902 & 30.026 & 25.153 & 20.286 & 15.432 & 10.641 &7.999  \\ \hline
$\varepsilon_\alpha(\overline{G_{25}})$ & 28 & 27.76 & 27.52 & 27.28 & 27.04 & 26.8 & 26.56 & 26.32 & 26.08 & 26.48 &27.199 \\ \hline
$\varepsilon_\alpha(K_{25})$                            & 48 & 43.2 & 38.4 & 33.6 & 28.8 & 24 & 19.2 & 14.4 & 9.6 & 4.8 & 0.004\\ \hline
\multicolumn{12}{|c|}{}\\ \hline

$\varepsilon_\alpha(G_{125})$ 	&	294.402	&	268.722	&	243.043	&	217.363	&	191.684	&	166.005	&	140.326	&	114.648	&	88.973	&	63.306	&	39.996\\ \hline
$\varepsilon_\alpha(\overline{G_{125}})$	&	148	&	146.96	&	145.92	&	144.88	&	143.84	&	142.8	&	141.76	&	140.72	&	139.68	&	138.64	&	139.196	\\ \hline
$\varepsilon_\alpha(K_{125})$ 	&	248	&	223.2	&	198.4	&	173.6	&	148.8	&	124	&	99.2	&	74.4	&	49.6	&	24.8	&	0.0248	\\ \hline
\multicolumn{12}{|c|}{}\\ \hline

$\varepsilon_\alpha(G_{625})$ 	&	1494.401	&	1364.721	&	1235.041	&	1105.361	&	975.681	&	846.001	&	716.321	&	586.641	&	456.962	&	327.285	&	199.981	\\ \hline
$\varepsilon_\alpha(\overline{G_{625}})$	&	748	&	742.96	&	737.92	&	732.88	&	727.84	&	722.8	&	717.76	&	712.72	&	707.68	&	702.64	&	699.180	\\ \hline
$\varepsilon_\alpha(K_{625})$ 	&	1248	&	1123.2	&	998.4	&	873.6	&	748.8	&	624	&	499.2	&	374.4	&	249.6	&	124.8	&	0.124	\\ \hline
\multicolumn{12}{|c|}{}\\ \hline

$\varepsilon_\alpha(G_{49})$ 	&	118.291	&	107.405	&	96.520	&	85.635	&	74.751	&	63.867	&	52.984	&	42.103	&	31.226	&	20.372	&	11.998	\\ \hline
$\varepsilon_\alpha(\overline{G_{49}})$	&	54	&	53.742	&	53.485	&	53.228	&	52.971	&	52.714	&	52.457	&	52.2	&	51.942	&	52.028	&	53.141	\\ \hline
$\varepsilon_\alpha(K_{49})$ 	&	96	&	86.4	&	76.8	&	67.2	&	57.6	&	48	&	38.4	&	28.8	&	19.2	&	9.6	&	0.009	\\ \hline
\multicolumn{12}{|c|}{}\\ \hline

$\varepsilon_\alpha(G_{121})$ 	&	318.183	&	288.092	&	258.001	&	227.911	&	197.820	&	167.73	&	137.639	&	107.550	&	77.461	&	47.378	&	19.998	\\ \hline
$\varepsilon_\alpha(\overline{G_{121}})$	&	130	&	129.727	&	129.454	&	129.181	&	128.909	&	128.636	&	128.363	&	128.091	&	127.818	&	127.545	&	129.089	\\ \hline
$\varepsilon_\alpha(K_{121})$ 	&	240	&	216	&	192	&	168	&	144	&	120	&	96	&	72	&	48	&	24	&	0.024	\\ \hline
\multicolumn{12}{|c|}{}\\ \hline

\end{tabular}
\caption{$A_\alpha$-energy of $G_{{p}^m}$, $\left(\overline{G_{{p}^m}}\right)$ and $K_{{p}^m}$ for different values of $\alpha$.}\label{enetable}
\end{table}

\textbf{Observation 2 :}
From Table \ref{enetable} we also observe the following:
\begin{enumerate}
    \item For all $m$ and some $\alpha$, $G_{3^m}$ is $A_\alpha$-borderenergetic and as $m$ increases, $\alpha\rightarrow0$.
    \medskip
    
    Eg: $G_9$ is borderenergetic for $\alpha=0.375$, (see Table \ref{bordertab}). However $G_{{p}^m}$ is not $A_\alpha$-borderenergetic for ${p}\geq5$ and $m\geq2$. 
    
    % $G_{3^m}$ is $A_\alpha$-borderenergetic for some $\alpha$ and not for any other ${p}\geq5$. For $G_{3^m}$, as $m$ increases $\alpha\rightarrow 0$. 
    \item $G_{{p}^m}$ is not adjacency borderenergetic for any ${p}$ and $m$.
     \item For all $m$ and ${p}\geq5$, $\overline{G}_{{p}^m}$ is $A_\alpha$-borderenergetic for some $\alpha\in(0.4,0.5)$.
     \medskip
     
     Eg: $\overline{G}_5$ is borderenergetic for $\alpha=0.454545454545$.\\
     $\overline{G}_{49}$ is borderenergetic for $\alpha=0.449541284404$.
\end{enumerate}

% \begin{table}[H]
%     \centering
%     \small
%     \begin{tabular}{|c|c|c|c|c|c|c|c|c|c|}
%     \hline	
%      	&	$G_9$	&	$G_{27}$	&	$G_{81}$	&	$G_5$	&	$G_{25}$	&	$G_{125}$	&	$G_{625}$	&	$G_{49}$	&	$G_{121}$	\\ \hline	
% $\alpha$	&	0.375	&	0.1159	&	0.0375	&	0.7555	&	-	&	-	&	-	&	-	&	-	\\ \hline
% $\varepsilon_\alpha(G_{{p}^m})$ & 10 & 45.9722 & 153.9973 & 1.9559 &	-	&	-	&	-	&	-	&	-	\\ \hline
% $\varepsilon_\alpha(K_{{p}^m})$ & 10 & 45.9732 & 154 & 1.956 &	-	&	-	&	-	&	-	&	-	\\ \hline

% \multicolumn{10}{|c|}{}\\	\hline																				
																			
%  	&	$\overline{G}_{9}$	&	$\overline{G}_{27}$	&	$\overline{G}_{81}$	&	$\overline{G}_{5}$	&	$\overline{G}_{25}$	&	$\overline{G}_{125}$	&	$\overline{G}_{625}$	&	$\overline{G}_{49}$	&	$\overline{G}_{121}$	\\ \hline	
% $\alpha$	&	0.4286	&	0.3913	&	0.3803	&	0.4545	&	0.4386	&	0.4209	&	0.4175	&	0.4495	&	0.4636	\\ \hline	
% $\varepsilon_\alpha(\overline{G}_{{p}^m})$ & 9.1428 & 31.6522
%  & 99.1546 & 4.3636 &	26.94736	&	143.62264	&	726.958	&	52.8441	&	128.7356	\\ \hline
% $\varepsilon_\alpha(K_{{p}^m})$ & 9.1424 & 31.6524 & 99.152 & 4.364 &	26.9472	&	143.6168	&	726.96
% 	&	52.848	&	128.736	\\ \hline
%     \end{tabular}
%     \caption{Value of $\alpha$ for which $G_{{p}^m}$ and $\overline{G}_{{p}^m}$ are $A_\alpha$-borderenergetic.}
%     \label{bordertab}
% \end{table}

\begin{table}[H]
    \centering
    \begin{tabular}{|c|c|c|c|}
    \hline
    $ n $& $\alpha $& $\varepsilon_\alpha(G_n)$ & $\varepsilon_\alpha(K_n)$ \\
\hline
    9 & 0.375 & 10 & 10 \\
    \hline
    27 & 0.115981467677 & 45.968963680773& 45.968963680773\\
    \hline
    81 & 0.037573807884 & 153.988190738623 & 153.988190738623 \\
    \hline
    243 & 0.012405082000 & 477.995940311853& 477.995940311853\\
    \hline
    5 & 0.755511381250 & 1.955908950000 & 1.955908950000\\
    \hline
    \end{tabular}
    \caption{Values of $\alpha$ for which $G_{{p}^m}$ is $A_\alpha$-borderenergetic.}
    \label{bordertab}
\end{table}

\begin{table}[H]
    \centering
    \begin{tabular}{|c|c|c|c|}
\hline
   $ n$ & $\alpha$ & $\varepsilon_\alpha(\overline{G}_n)$ & $\varepsilon_\alpha(\overline{K}_n)$\\
\hline
    9 & 0.428571428571 & 9.142857142857 & 9.142857142857 \\
    \hline
    27 & 0.391304347826 & 31.652173913043 & 31.652173913043 \\
    \hline
    81 & 0.380281690141 & 99.154929577465 & 99.154929577465 \\
    \hline
    5 & 0.454545454545 & 4.363636363636 & 4.363636363636 \\
    \hline
    25 & 0.438596491228 & 26.947368421053 & 26.947368421053 \\
    \hline
    125 & 0.420875420875 & 143.622895622896 & 143.622895622896 \\
    \hline
    625 & 0.417501670007 & 726.957915831663 & 726.957915831663 \\
    \hline
    49 & 0.449541284404 & 52.844036697248 & 52.844036697248 \\
    \hline
    121 & 0.463601532567 & 128.735632183908 & 128.735632183908 \\
    \hline
    \end{tabular}
     \caption{Values of $\alpha$ for which $\overline{G}_{{p}^m}$ is $A_\alpha$-borderenergetic.}
    \label{bordertab2}
\end{table}

% \begin{table}[h]
%     \centering
%     \begin{tabular}{|c|c|c|c|c|c|c|c|c|c|}
%     \hline	
%      	&	$G_9$	&	$G_{27}$	&	$G_{81}$	&	$G_5$	&	$G_{25}$	&	$G_{125}$	&	$G_{625}$	&	$G_{49}$	&	$G_{121}$	\\ \hline	
% $\alpha$	&	0.375	&	0.1159	&	0.0375	&	0.7555	&	-	&	-	&	-	&	-	&	-	\\ \hline	
% \multicolumn{10}{|c|}{}\\	\hline																				
																			
%  	&	$\overline{G}_{9}$	&	$\overline{G}_{27}$	&	$\overline{G}_{81}$	&	$\overline{G}_{5}$	&	$\overline{G}_{25}$	&	$\overline{G}_{125}$	&	$\overline{G}_{625}$	&	$\overline{G}_{49}$	&	$\overline{G}_{121}$	\\ \hline	
% $\alpha$	&	0.4286	&	0.3913	&	0.3803	&	0.4545	&	0.4386	&	0.4209	&	0.4175	&	0.4495	&	0.4636	\\ \hline	
%     \end{tabular}
%     \caption{Value of $\alpha$ for which $G_{{p}^m}$ and $\overline{G}_{{p}^m}$ are $A_\alpha$-borderenergetic.}
%     \label{bordertab}
% \end{table}

\section{Conclusion}
In this article the $A_\alpha$-eigenvalues of unitary addition Cayley graph $G_n$ when its order is ${p}^m$ or even is calculated. When its order is odd, a bound for the $A_\alpha$-eigenvalues is found. In addition, the $A_\alpha$-energy of $G_n$ when $n$ is ${p}^m$ or even is computed. For odd $n$, bounds for $A_\alpha$-energy of $G_n$ and $\overline{G}_n$ are estimated. Also the concept of $A_\alpha$-borderenergetic and $A_\alpha$-hyperenergetic graphs is introduced and found some families of each.

\section*{Acknowledgement}
The authors would like to thank the DST, Government of India, for providing support to carry out this work under the scheme 'FIST'(No.SR/FST/MS-I/2019/40).
The first author gratefully acknowledges the financial support of University Grants Commission(UGC), India.

\end{document}